\documentclass[12pt]{amsart}

\hoffset-1.5cm \voffset-1cm

\textheight=220truemm \textwidth=154truemm

\usepackage{amssymb}

\newtheorem{theorem}{\bf  Theorem}
\newtheorem{lemma}{\bf  Lemma}
\newtheorem{statement}{\bf \sc Statement}
\newtheorem{proposition}{\bf  Proposition}
\newtheorem{corollary}{\bf \sc Corollary}
\newtheorem{remark}{ \sc Remark}

\subjclass{47A68}

\keywords{Positively definite matrix function, factorization}

\begin{document}


\begin{center}
    {\bf A New Method of Matrix Spectral Factorization  \footnote{This paper
    includes the detailed proofs for an
innovative method for matrix spectral factorization that can be
used in numerous applications, including Filtering, Data
Compression, and Wireless Communications. A U.S. patent
application has been submitted for this innovation through the
Technology
Commercialization Center of the University of Maryland.}}\\[5MM]

 G.~Janashia$^\dag$, E.~Lagvilava, and  L.~Ephremidze
        \end{center}

\vskip+0.8cm {\bf  {\em Abstract}---A new  method of matrix
spectral factorization is proposed which reliably computes an
approximate spectral factor of any matrix spectral density that
admits spectral factorization. }

\vskip+0.2cm {\bf {\em Index Terms}---Matrix spectral
factorization algorithm.}

\vskip+1cm

\section{Introduction}

Spectral factorization plays a prominent role in a wide range of
fields in Communications, System Theory, Control Engineering and
so on. In the scalar case  arising for single input and single
output systems, the factorization problem is relatively easy and
several classical algorithms exist to tackle it (see the survey
paper [17]) together with  reliable information on their software
implementations [8]. There are also  some recent claims as to
their improvement [2]. Matrix spectral factorization which arises
for multi-dimensional systems is essentially more difficult (see
Sect. 2, where the mathematical reasons of this fact are
explained). Since Wiener's original efforts [19] to create a sound
computational method of such factorization, tens of different
algorithms have appeared in the literature (see the survey papers
[16], [17] and the references therein), but none of them is
thought to have an essential superiority over all others (see [16,
p. 1077], [14, p. 206]). Besides, most of these algorithms impose
extra restrictions on matrix spectral densities (e.g., to be real
or rational or nonsingular on the boundary), while the
Paley-Wiener necessary and sufficient condition (see (2)) will do
for the existence of spectral factorization (see Sect. 2).

In the present paper,  a  new computational method of matrix
spectral factorization is developed. The proposed algorithm can be
applied to any matrix spectral density  satisfying the
Paley-Wiener condition. It should be said that the branch of
mathematics where the spectral factorization problem is posed in
its general non-rational setting (see Sect. 2) is  the theory of
Hardy spaces (see Sect. 3),  and this method is completely worked
out in the framework of Hardy spaces, which added to its
effectiveness.

To describe our method of $r\times r$ matrix spectral
factorization in a few words,  it carries out spectral
factorization of $m\times m$ left-upper submatrices step-by-step,
$m=1,2,\ldots,r$. It is shown that in this process the decisive
role is played by  unitary matrix functions of certain structure
(see Theorem 1), which removes many technical difficulties
connected with computation. The explicit construction of such
matrices in Theorem 2 is an essential component of the algorithm.
A close relationship of these unitary matrix functions with
compactly supported wavelets has recently been discovered, which
makes it possible to construct compact wavelets in a fast and
reliable way and to completely parameterize them (see [6]).

Preliminary numerical simulations confirm the potential of the
proposed algorithm (see Sect. 7).

The algorithm was announced in [3] and,  for second order
matrices, described in [12].

\section{Formulation of the problem}

A series of papers [18], [19], [10], [11] led to the following

\smallskip

{\bf Wiener Matrix Spectral Factorization Theorem:} Let
\begin{equation}
S(t)=\begin{pmatrix} s_{11}(t)& s_{12}(t)& \cdots&s_{1r}(t)\\
s_{21}(t)& s_{22}(t)& \cdots&s_{2r}(t)\\
\vdots&\vdots&\vdots&\vdots\\s_{r1}(t)& s_{r2}(t)&
\cdots&s_{rr}(t)\end{pmatrix},
\end{equation}
$t\in {\mathbb T}$, be a  positive definite (a.e.) integrable
matrix function,  $0<S(t)\in L_1({\mathbb T})$, which satisfies
the  condition
\begin{equation}
\log \det S(t)\in L_1({\mathbb T}).
\end{equation}
Then it admits a {\em spectral factorization}
\begin{equation}
    S(t)=S^+(t)S^-(t)=S^+(t)\big(S^+(t)\big)^*,
\end{equation}
where $S^+$ is an $r\times r$ outer analytic  matrix function from
the Hardy space $H_2$ and
$S^-(z)=\big(S^+(1/\overline{z})\big)^*$, $|z|>1$. It is assumed
that (3) holds a.e. on $\mathbb{T}$. (The factorization (3) is
called {\em left} since the analytic inside $\mathbb{T}$ factor
stands on the left-hand side. The {\em right} spectral
factorization of $S$ can be obtained by the left factorization of
$S^T$.)

\smallskip

The sufficient condition (2) is also a necessary one for the
factorization (3) to exist (see Sect. 3).

A spectral factor $S^+(z)$ is unique up to a constant right
unitary multiplier (see, e.g., [5]), and the unique spectral
factor with an additional requirement that $ S^+(0)$ be positive
definite is called {\em canonical}.

After the proof of the existence of matrix spectral factorization,
the computation of the spectral factor for a given matrix spectral
density has become a challenging problem due to its applications
in practice.

In the scalar case, $r=1$, the canonical spectral factor $S^+\in
H_2$ can be explicitly written by the formula (see, e.g., [20;
VII, 7.33])
\begin{equation}
 S^+(z)=\exp\left(\frac 1{4\pi}
\int\nolimits_0^{2\pi}\frac{e^{i\theta}+z}{e^{i\theta}-z}\log
S(e^{i\theta})\,d\theta\right)
\end{equation}
and it is relatively easy to compute $S^+$ approximately. However,
there is no analog of this formula  in the matrix case because,
generally speaking, $e^{A+B}\not=e^Ae^B$ for non-commutative
matrices $A$ and $B$. This is the main reason for which the
approximate computation of the spectral factor $S^+$ in (3) for
the matrix spectral density (1) is essentially more difficult. The
present paper provides an algorithm for such computation.

The proposed method does not contribute to the improvement of
(numerical) scalar spectral factorization, but employs it to
fulfill matrix spectral factorization.

\section{Notation and Conventions}

Let $\mathbb{D}=\{z\in\mathbb{C}:|z|<1\}$, and
$\mathbb{T}=\partial\mathbb{D}$ be the unit circle. As usual,
$L_p=L_p(\mathbb{T})$, $0<p\leq\infty$, denotes the Lebesgue space
of $p$-integrable complex functions defined on $\mathbb{T}$.
$H_p=H_p(\mathbb{D})$, $0<p\leq\infty$, is the Hardy space of
analytic functions in $\mathbb{D}$ ,
$$
H_p=\left\{f\in\mathcal{A}(\mathbb{D}):\sup\limits_{r<1}
\int\nolimits_0^{2\pi}|f(re^{i\theta})|^p\,d\theta<\infty\right\}
$$
($H_\infty$ is the space of bounded analytic functions), and
$L_p^+=L_p^+(\mathbb{T})$ denotes the class of their boundary
functions. (All the relations for functions from
$L_p^+(\mathbb{T})$ or $L_p(\mathbb{T})$ are assumed to hold
almost everywhere.) Since there is a one-to-one correspondence
between $H_p(\mathbb{D})$ and $L_p^+(\mathbb{T})$, $p>0$ (see,
e.g., [20; VII, 7.25]), we naturally regard these two classes as
identical, and thus we can speak about the values of $f\in
L_p^+(\mathbb{T})$ inside the unit circle. Furthermore, we always
use the argument $t$ for the functions defined on $\mathbb{T}$ and
the argument $z$ for the functions defined in $\mathbb{D}$, so
that the boundary function of $f=f(z)\in H_p(\mathbb{D})$ is
denoted by $f=f(t)\in L_p^+(\mathbb{T})$ and we write
$f(z)|_{z=t}=f(t)$ when we wish to point out this fact. If we
write only $f$, its domain will be clear from the context.

We have $\log |f(t)|\in L_1(\mathbb{T})$ for each $0\not\equiv
f\in H_p$, $p>0$ \big(see, e.g., [20; VII, 7.25]\big), which
readily implies the necessity of the condition (2) for the
factorization (3) to exist since $L_1(\mathbb{T})\ni \log|\det
S^+(t)|=\frac12\log \det S(t)$.

The $n$th Fourier coefficient of an integrable function $f\in
L_1(\mathbb{T})$ is denoted by $c_n(f)$. For $p\geq 1$,
$L_p^+(\mathbb{T})$ coincides with the class of functions from
$L_p(\mathbb{T})$ whose  Fourier coefficients with negative
indices are equal to zero. We also deal  with
$L_p^-(\mathbb{T})=\{\overline{f}:f\in L_p^+(\mathbb{T})\}=\{f\in
L_p(\mathbb{T}):c_n(f)=0 \text{ whenever }n>0\}$.  The set of
trigonometric polynomials is denoted by $\mathcal{P}$, i.e. $f\in
\mathcal{P}$ if $f$ has only a finite number of nonzero Fourier
coefficients. Also let $\mathcal{P}^\pm:=\mathcal{P}\cap
L_\infty^\pm$, $\mathcal{P}_N:=\{f\in \mathcal{P}:c_n(f)=0 \text{
whenever }|n|>N\}$, and $\mathcal{P}_N^\pm=\mathcal{P}_N\cap
\mathcal{P}^\pm$. Obviously, $f\in \mathcal{P}_N^+ \Leftrightarrow
\overline{f}\in \mathcal{P}_N^-$.

For $f(t)=\sum_{n=-\infty}^\infty c_nt^n\in L_2(\mathbb{T})$, let
$P^+f(t)$, $P^-f(t)$, and $P_Nf(t)$ be the projections
$\sum_{n=0}^\infty c_nt^n$, $\sum_{n=-\infty}^0 c_nt^n$, and
$\sum_{n=-N}^N c_nt^n$, respectively, on $L_2^+(\mathbb{T})$,
$L_2^-(\mathbb{T})$, and $\mathcal{P}_N$.

The superscript  "+" (resp. "$-$") of a function $f^+$ (resp.
$f^-$) emphasizes that this function belongs to $L_p^+$ (resp.
$L_p^-$).

The norms $\|\cdot\|_{L_p}$ and $\|\cdot\|_{H_p}$ are defined in a
usual way.

If $M $ is a matrix, then $\overline{M}$ denotes the matrix with
conjugate entries and $M^*:=\overline{M}^T$. If $M$ is positive
definite, $M>0$, then the unique $M_0>0$ that satisfies
$M_0M_0^*=M$ is denoted by $\sqrt{M}$.

If $M$ is an $r\times r$ matrix and $m\leq r$, then $(M)_{m\times
m}$ is assumed be the $m\times m$ upper-left  submatrix of $M$.

An $r\times r$ matrix $U$ is called unitary if $UU^*=U^*U=I_r$,
where $I_r$ stands for the $r$-dimensional unit matrix. Obviously
the entries of a unitary matrix are bounded by 1.

A matrix function $M(t)$ defined on $\mathbb{T}$ is called
positive definite or unitary if it is such  for almost all
$t\in\mathbb{T}$. $M(t)$ is said to belong to some class, say,
 $L_p^+(\mathbb{T})$ (we write $M(t)\in L_p^+(\mathbb{T})$) if its
entries belong to this class. $P_NM(t)$ denotes the matrix
function whose entries are the projections of the entries of
$M(t)$ on $\mathcal{P}_N$. A sequence of matrix functions is said
to be convergent in  $L_p$-norm if their entries are convergent in
this norm.

The class of $r\times r$ unitary   matrix functions $U(t)$,
\begin{equation}
U(t)U^*(t)=I_r \;\;\text{a.e.},
 \end{equation}
is denoted by $\mathcal{U}_r(\mathbb{T})$, and
$\mathcal{S}\mathcal{U}_r(\mathbb{T})$ stands for the subclass of
those $U(t)\in \mathcal{U}_r(\mathbb{T})$  the determinants of
which are equal to 1,
\begin{equation}
\det U(t)=1 \;\;\text{a.e.}
 \end{equation}

The set of outer analytic functions  from the Hardy space $H_p$,
$p>0$, is denoted by $\mathcal{O}_p$. Recall that $f\in
\mathcal{O}_p$ if and only if $0\not\equiv f\in H_p$ and
$$
f(z)=c\cdot  \exp\left(\frac 1{2\pi}
\int\nolimits_0^{2\pi}\frac{e^{i\theta}+z}{e^{i\theta}-z}\log
\big|f(e^{i\theta})\big|\,d\theta\right),\;\;\;\;\;|c|=1.
$$
(From this definition and  H\"{o}lder's inequality it follows that
if $f\in \mathcal{O}_p$ and $g\in \mathcal{O}_q$, then $fg\in
\mathcal{O}_{(p+q)/pq}$\,.) Clearly, $f(z)\not=0$ for each $z\in
\mathbb{D}$ and $|f(t)|>0$ for a.a. $t\in \mathbb{T}$ if $f\in
\mathcal{O}_p$. The set of functions $f\in \mathcal{O}_p$ which
are positive at the origin (which happens when $c=1$ in the above
definition) is denoted by $\mathcal{O}_p^0$. We say that a
$r\times r$ matrix function $M(t)\in H_p$, $p\geq 0$, is outer if
its determinant belongs to
 $\mathcal{O}_{p/r}$. This definition coincides with some other equivalent
 definitions of outer matrix functions (see, e.g., [11]). $M(t)\in
 \mathcal{O}_{p/r}^0$ means that $M(0)>0$ in addition.

 $f_n\rightrightarrows f$ means that $f_n$ converges to $f$ in measure.

$\langle\cdot,\cdot\rangle_m$ and $\|\cdot\|_{\mathbb{C}^m}$
denote the usual scalar product and the norm, respectively, in the
$m$-dimensional complex space $\mathbb{C}^m$.

$\delta_{ij}$ stands for the  Kronecker delta.

\smallskip

To conclude the section, we formulate a simple statement from the
Lebesgue integral theory in the best suitable form  for further
references.
\begin{statement}
Let $f_n(t)\in L_2(\mathbb{T})$, $n=1,2,\ldots$,
$\|f_n(t)-f(t)\|_{ L_2}\to 0$, $u_n(t)\in L_\infty(\mathbb{T})$,
$u_n(t)\leq 1$, $n=1,2,\ldots$, and $u_n(t)\rightrightarrows
u(t)$. Then $\|f_n(t)u_n(t)-f(t)u(t)\|_{ L_2}\to 0$ $(see, e.g.,
[9;\S26, Th.\, 3])$.
\end{statement}

\section{Mathematical Background of the Method}

In this section we formulate some statements needed to describe
our method. Most of the proofs are given in the next sections.

The uniqueness of spectral factorization (3) mentioned in Sect. 2
means that $S^+(z)\cdot U$ is also a spectral factor for any
(constant) unitary matrix $U$, and if $S_1^+(z)$ and $S_2^+(z)$
are two spectral factors, then $S_1^+(z)= S_2^+(z) U$ for some
unitary matrix $U$ (see, e.g., [5]). Since for any $r\times r$
non-singular matrix $S$ there exists a unique unitary matrix $U$
which makes the product $SU$ positive definite (see, e.g. [7; IX
\S 14]),  the canonical spectral factor $S_c^+(z)$ (with an
additional requirement that $ S_c^+(0)$ be positive definite) is
unique. Namely,
\begin{equation}
S_c^+(z)=S^+(z)\big(S^+(0)\big)^{-1}\sqrt{S^+(0)(S^+(0))^*}
 \end{equation}
 for any spectral factor $S^+(z)$. (Other uniqueness restrictions
 on $S^+$ can be imposed so that $S^+(0)$ would be, for example,  lower triangular with positive
 entries on the diagonal.) The following lemma can be
 applied for the approximation of the canonical spectral factor
 after the approximate computation of an arbitrary  spectral
 factor.

\begin{lemma}
Let $S^+(t)$ be a spectral factor of $(1)$ and let $S_n^+(t)\in
H_2$, $n=1,2,\ldots$, be such that
 \begin{equation}
\|S_n^+(t)-S^+(t)\|_{H_2}\to 0\;\;\text{as}\;\;n\to \infty.
 \end{equation}
Then
\begin{equation}
\big\|S_n^+(z)\big(S_n^+(0)\big)^{-1}\sqrt{S_n^+(0)(S_n^+(0))^*}-S_c^+(z)\big\|_{H_2}\to
0\;\;\text{as}\;\;n\to \infty.
\end{equation}
\end{lemma}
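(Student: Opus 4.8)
The plan is to show that each operation appearing in formula $(7)$—evaluation at $0$, matrix inversion, the map $M\mapsto\sqrt{MM^*}$, and multiplication—behaves continuously with respect to the hypothesis $(9)$, and then to chain these continuities together. The key preliminary observation is that $H_2$-convergence $\|S_n^+-S^+\|_{H_2}\to 0$ implies pointwise convergence of the analytic functions on compact subsets of $\mathbb{D}$, in particular $S_n^+(0)\to S^+(0)$ entrywise; this follows from the fact that evaluation at a fixed point of $\mathbb{D}$ is a bounded linear functional on $H_2$ (the Cauchy/reproducing-kernel estimate $|f(z_0)|\le C_{z_0}\|f\|_{H_2}$). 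Since $S^+(0)$ is nonsingular (its determinant is the value at $0$ of the outer function $\det S^+\in\mathcal{O}_1$, hence nonzero by the property recalled in Section 3), the matrices $S_n^+(0)$ are nonsingular for all large $n$, and $\big(S_n^+(0)\big)^{-1}\to\big(S^+(0)\big)^{-1}$ because matrix inversion is continuous on $GL_r(\mathbb{C})$.

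Next I would treat the square-root factor. Because $S_n^+(0)(S_n^+(0))^*\to S^+(0)(S^+(0))^*$ and the limit is positive definite, the Cholesky-type factor $\sqrt{\,\cdot\,}$ defined in Section 3 (the unique positive-definite $M_0$ with $M_0M_0^*=M$) depends continuously on its argument in a neighborhood of any positive-definite matrix; hence $\sqrt{S_n^+(0)(S_n^+(0))^*}\to\sqrt{S^+(0)(S^+(0))^*}$. Combining this with the previous paragraph, the constant matrix
\[
C_n:=\big(S_n^+(0)\big)^{-1}\sqrt{S_n^+(0)(S_n^+(0))^*}
\]
converges entrywise to $C:=\big(S^+(0)\big)^{-1}\sqrt{S^+(0)(S^+(0))^*}$, and in particular $\sup_n\|C_n\|<\infty$ in any fixed matrix norm. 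Note $S_c^+(z)=S^+(z)C$ by $(7)$.

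It then remains to pass from the $H_2$-convergence of the first factor and the ordinary convergence of the constant second factor to $H_2$-convergence of the product. Writing
\[
S_n^+(z)C_n-S^+(z)C=\big(S_n^+(z)-S^+(z)\big)C_n+S^+(z)\big(C_n-C\big),
\]
I would bound the $H_2$-norm of the first summand by $\|S_n^+-S^+\|_{H_2}\cdot\|C_n\|\to 0$ (using the uniform bound on $\|C_n\|$ and the submultiplicativity of the matrix norm under the entrywise $H_2$-norm) and the $H_2$-norm of the second summand by $\|S^+\|_{H_2}\cdot\|C_n-C\|\to 0$. This gives $(10)$.

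I do not expect a genuine obstacle here; the statement is essentially a continuity/robustness lemma. The one point that needs a little care is justifying that $H_2$-norm convergence of matrix functions really does force $S_n^+(0)\to S^+(0)$—i.e. invoking the boundedness of point evaluation rather than just the boundary $L_2$ interpretation—and, relatedly, making sure the matrix $S^+(0)$ is invertible so that all the inverses and square roots in $(7)$ are legitimately defined along the sequence; both are handled by the outer-function property of $\det S^+$ recalled in Section 3. The rest is the routine "product of a convergent sequence in a Banach space with a convergent sequence of bounded operators" argument, with the mild nuisance that one works entrywise and uses equivalence of the various matrix norms.
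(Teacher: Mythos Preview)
Your argument is correct and follows the same route as the paper: the paper's proof is simply the terse statement that $H_2$-convergence gives $S_n^+(0)\to S^+(0)$, the nonsingularity of $S^+(0)$ then yields convergence of the inverses and of the positive square roots, and (9) follows from (8) and (7). Your write-up merely unpacks these steps (point-evaluation bound, continuity of inversion and of $M\mapsto\sqrt{MM^*}$, and the two-term splitting $S_n^+C_n-S^+C=(S_n^+-S^+)C_n+S^+(C_n-C)$) that the paper leaves implicit.
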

\begin{proof}
Since $S^+(0)$ is non-singular and (8) implies that $S_n^+(0)\to
S^+(0)$, we have
$\sqrt{S_n^+(0)\big(S_n^+(0)\big)^*}\to\sqrt{S^+(0)\big(S^+(0)\big)^*}$
and  $\big(S_n^+(0)\big)^{-1}\to \big(S^+(0)\big)^{-1}$. Therefore
(9) follows from (8) and (7).
\end{proof}

The following lemma  is used several times throughout the paper.

\begin{lemma}
Let $M(t)$ be any $m\times m$ matrix function from
$L_2(\mathbb{T})$ satisfying
 \begin{equation}
\det M(t)\in \mathcal{O}_{2/m}\subset
H_{2/m}=L_{2/m}^+(\mathbb{T}).
 \end{equation}
 If
${U}(t)\in \mathcal{S}\mathcal{U}_m(\mathbb{T})$  is such that
\begin{equation}
 M(t){U}(t)\in L_2^+({\mathbb T})
 \end{equation}
holds, then $M(t){U}(t)$ is a spectral factor of $M(t)M^*(t)$.
\end{lemma}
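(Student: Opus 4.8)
The plan is simply to verify that $M(t)U(t)$ has the three defining properties of a spectral factor of $M(t)M^*(t)$ as in the Wiener theorem and (3): it must lie in $H_2$, it must be outer, and it must satisfy $(MU)(MU)^*=MM^*$. Two of these are immediate. Since $M\in L_2(\mathbb{T})$ and the entries of a unitary matrix are bounded by $1$, we have $MU\in L_2(\mathbb{T})$; together with the hypothesis (12) this gives $MU\in L_2^+(\mathbb{T})=H_2$. And since $U$ is unitary, $U(t)U^*(t)=I_m$ a.e., so
\[
\big(M(t)U(t)\big)\big(M(t)U(t)\big)^* = M(t)\,U(t)U^*(t)\,M^*(t) = M(t)M^*(t)\qquad\text{a.e. on }\mathbb{T},
\]
which is precisely the factorization identity.

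The one point needing a short argument is that $MU$ is outer, i.e. $\det(MU)\in\mathcal{O}_{2/m}$. First, each entry of $MU$ lies in $L_2^+(\mathbb{T})=H_2$, and a product of $m$ functions from $H_2$ lies in $H_{2/m}$ (H\"{o}lder's inequality for Hardy spaces, cf.\ the remark preceding the statement); since $\det(MU)$ is a sum of such products, $\det(MU)\in H_{2/m}=L_{2/m}^+(\mathbb{T})$. Second, because $\det U(t)=1$ a.e. by (6), we have $\det(M(t)U(t))=\det M(t)\,\det U(t)=\det M(t)$ a.e. on $\mathbb{T}$. Hence $\det(MU)$ and $\det M$ both lie in $H_{2/m}$ and have the same boundary values, so they coincide as Hardy functions; since $\det M\in\mathcal{O}_{2/m}$ by (11), so is $\det(MU)$. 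Thus $MU\in H_2$ is outer and satisfies $(MU)(MU)^*=MM^*$, hence is a spectral factor of $MM^*$.

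For completeness I would note that $MM^*$ genuinely admits a spectral factorization, so the statement is not vacuous: $MM^*\in L_1(\mathbb{T})$ is positive definite a.e.\ (its determinant is $|\det M|^2$, which is positive a.e.\ because $\det M$ is outer), and $\log\det(MM^*)=2\log|\det M|\in L_1(\mathbb{T})$, so the hypotheses of the Wiener theorem hold. I expect no real obstacle here; the only thing I would be careful about is that being ``outer'' is a property of the analytic continuation, not merely of the modulus of the boundary function, which is exactly why I pass through the equality $\det(MU)=\det M$ inside $H_{2/m}$ rather than just comparing $|\det(MU)|$ and $|\det M|$ on $\mathbb{T}$.
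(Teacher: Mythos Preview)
Your proof is correct and follows essentially the same route as the paper's: use unitarity of $U$ for the identity $(MU)(MU)^*=MM^*$, and use $\det U=1$ on $\mathbb{T}$ to identify the boundary values of $\det(MU)$ with those of $\det M$, concluding that the analytic extension of $\det(MU)$ coincides with the outer function $\det M$. Your write-up is in fact slightly more explicit than the paper's (you spell out why $\det(MU)\in H_{2/m}$ and invoke uniqueness of the Hardy extension), and your closing remark that outerness is a property of the analytic continuation, not merely of the boundary modulus, is exactly the point the paper formalizes as Corollary~1.
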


\begin{proof}
Taking into account (5), we have
$$M(t){U}(t)\cdot\big(M(t){U}(t)\big)^*=M(t){U}(t){U}^*(t)M^*(t)=M(t)M^*(t).$$
In view of (11), $MU$ can be extended inside $\mathbb{T}$. Hence,
by virtue of (6),
$$
\det( M{U})(z)|_{z=t}=\det\big( M(t){U}(t)\big)=\det M(t)=\det
M(z)|_{z=t}
$$
Thus $\det (M{U})(z)$ is an outer analytic function (see (10)) and
lemma holds.
\end{proof}

This proof gives rise to

\begin{corollary}
Let $M(t)$ be any $m\times m$ matrix function from
$L_2(\mathbb{T})$ satisfying $\det M(t)\in L^+_{2/m}$. If
${U}(t)\in \mathcal{S}\mathcal{U}_m(\mathbb{T})$ is such that
$(11)$ holds, then
\begin{equation}
\det\big(M{U}\big)(z)=(\det M)(z),\;\;\;|z|<1.
\end{equation}
\end{corollary}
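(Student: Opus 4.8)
The plan is to extract exactly the computation that already appears inside the proof of Lemma 2, noting that the outerness hypothesis on $\det M$ was never used for the identity of determinants itself, only for the conclusion that the resulting factor is a \emph{spectral} factor. So the argument will be: first observe that condition (11), $M(t)U(t)\in L_2^+(\mathbb{T})$, means that every entry of $MU$ is the boundary function of an $H_2$-function, hence $MU$ admits an analytic extension $(MU)(z)$ inside $\mathbb{D}$. Consequently $\det\big((MU)(z)\big)$ is analytic in $\mathbb{D}$ (it is a polynomial expression in the entries), and its boundary values are $\det\big(M(t)U(t)\big)$.

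Next I would use the defining property (6) of the class $\mathcal{S}\mathcal{U}_m(\mathbb{T})$, namely $\det U(t)=1$ a.e., to get
\begin{equation}
\det\big(M(t)U(t)\big)=\det M(t)\cdot\det U(t)=\det M(t)\quad\text{a.e. on }\mathbb{T}.
\end{equation}
On the other hand the hypothesis $\det M(t)\in L^+_{2/m}$ says precisely that $\det M$ is itself the boundary function of a function analytic in $\mathbb{D}$, namely $(\det M)(z)$. So now I have two functions analytic in $\mathbb{D}$, the function $z\mapsto\det\big((MU)(z)\big)$ and the function $z\mapsto(\det M)(z)$, whose boundary values coincide a.e. on $\mathbb{T}$.

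The final step is the standard uniqueness principle for the passage from boundary values to the disk: a function in (some) Hardy class that vanishes a.e. on $\mathbb{T}$ vanishes identically in $\mathbb{D}$, equivalently the boundary-value map $H_p(\mathbb{D})\to L_p^+(\mathbb{T})$ is injective — a fact already invoked in Section 3 of the paper (the one-to-one correspondence between $H_p(\mathbb{D})$ and $L^+_p(\mathbb{T})$). Applying this to the difference $\det\big((MU)(z)\big)-(\det M)(z)$ yields $\det\big(M U\big)(z)=(\det M)(z)$ for all $|z|<1$, which is exactly (14).

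I do not expect a genuine obstacle here; the only point requiring a word of care is that $MU$ need not lie in any single nice Hardy class a priori (the entries are in $L_2^+$, so $\det(MU)$ is only guaranteed to be in $L^+_{2/m}$, which for $m>2$ is a space with $p<1$), but this is harmless: the injectivity of the boundary-value correspondence holds for all $p>0$, and in any case the difference of the two analytic extensions, having zero boundary values, is forced to be the zero function. This is the same (mild) technical situation already handled implicitly in the proof of Lemma 2, so no new idea is needed.
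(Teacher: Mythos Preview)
Your proposal is correct and is essentially the same argument the paper has in mind: the paper does not give a separate proof but simply says ``This proof gives rise to'' the Corollary, meaning one re-reads the determinant computation in the proof of Lemma~2 (extend $MU$ inside $\mathbb{D}$ via (11), use $\det U(t)=1$ from (6) to match boundary values with $\det M$, then invoke the one-to-one correspondence $H_p\leftrightarrow L_p^+$) without ever using the outerness hypothesis. Your added remark that $\det(MU)$ lands only in $L^+_{2/m}$ with $2/m\le 1$ but that the boundary-value injectivity still holds for all $p>0$ is a fair clarification of a point the paper leaves implicit.
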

It should be  pointed out that on the left-hand side of (12) we
first extend $M{U}$  inside $\mathbb{T}$ and then compute its
determinant, while on the right-hand side we first take the
determinant of $M(t)$ and then extend it inside $\mathbb{T}$.

\smallskip

The following two theorems play a decisive  role in our method.

\begin{theorem}
For every $m\times m$ matrix function $F(t)\in L_2(\mathbb{T})$ of
the  form
\begin{equation}
F(t)=\begin{pmatrix}1&0&0&\cdots&0&0\\
          0&1&0&\cdots&0&0\\
           0&0&1&\cdots&0&0\\
           \vdots&\vdots&\vdots&\vdots&\vdots&\vdots\\
           0&0&0&\cdots&1&0\\
           \zeta_{1}(t)&\zeta_{2}(t)&\zeta_{3}(t)&\cdots&\zeta_{m-1}(t)&f^+(t)
           \end{pmatrix},
\end{equation}
where
\begin{equation}
\zeta_j(t)\in L_2({\mathbb T}),\;j=1,2,\ldots, m-1,\text{ and }
f^{+}(t)\in\mathcal{O}_2^0\subset  L^{+}_2({\mathbb T}),
\end{equation}
there exists a unique $U_F(t)\in \mathcal{S}
\mathcal{U}_m(\mathbb{T}) $ of the form
\begin{gather}
U_F(t)=\begin{pmatrix}u^+_{11}(t)&u^+_{12}(t)&\cdots&u^+_{1,m-1}(t)&u^+_{1m}(t)\\
                 u^+_{21}(t)&u^+_{22}(t)&\cdots&u^+_{2,m-1}(t)&u^+_{2m}(t)\\
           \vdots&\vdots&\vdots&\vdots&\vdots\\
           u^+_{m-1,1}(t)&u^+_{m-1,2}(t)&\cdots&u^+_{m-1,m-1}(t)&u^+_{m-1,m}(t)\\[3mm]
           \overline{u^+_{m1}(t)}&\overline{u^+_{m2}(t)}&\cdots&\overline{u^+_{m,m-1}(t)}&\overline{u^+_{mm}(t)}\\
           \end{pmatrix},\\
u^+_{ij}(t)\in L_\infty^{+}({\mathbb T}),\;\;i,j=1,2,\ldots,m,
\end{gather}
 such that
  \begin{equation}
F(t) U_F(t)=F_c^+(t)\in\mathcal{O}_2^0\subset L^{+}_2({\mathbb
T}),
\end{equation}
where $F_c^+(t)$ is the canonical spectral factor of $F(t)F^*(t)$.
\end{theorem}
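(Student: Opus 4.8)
The plan is to obtain $U_F$ directly from the canonical spectral factor of $F F^{*}$ and then read off its structure. First I would check that $F F^{*}$ satisfies the hypotheses of the Wiener Matrix Spectral Factorization Theorem: its entries lie in $L_1(\mathbb{T})$, being sums of products of two $L_2$ functions; it is positive definite a.e., since $\det F=f^{+}$ (expand $(13)$ along its last column) is nonzero a.e.; and $\log\det(F F^{*})=2\log|f^{+}|\in L_1(\mathbb{T})$ because $f^{+}\in\mathcal{O}_2$. Hence the canonical spectral factor $F_c^{+}\in\mathcal{O}_2^{0}$ of $F F^{*}$ exists. Since $F_c^{+}(F_c^{+})^{*}=F F^{*}$ we get $|\det F_c^{+}|=|f^{+}|$ a.e. and, as $F_c^{+}(0)>0$, also $\det F_c^{+}(0)>0$; because $\det F_c^{+}\in\mathcal{O}_{2/m}$ and $f^{+}\in\mathcal{O}_2\subset\mathcal{O}_{2/m}$, the uniqueness of an outer function with prescribed modulus and positive value at the origin forces $\det F_c^{+}=f^{+}$.

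Next I would set $U:=F^{-1}F_c^{+}$, well defined a.e. because $f^{+}\ne0$ a.e. From $F_c^{+}(F_c^{+})^{*}=F F^{*}$ one gets $U U^{*}=F^{-1}(F F^{*})(F^{*})^{-1}=I_m$, and $\det U=\det F_c^{+}/\det F=1$, so $U\in\mathcal{SU}_m(\mathbb{T})$. Let $A$ be the $(m-1)\times m$ matrix formed by the first $m-1$ rows of $F_c^{+}$. Since the top-left $(m-1)\times(m-1)$ block of $F F^{*}$ is $I_{m-1}$, the same block of $F_c^{+}(F_c^{+})^{*}$ equals $A A^{*}$, so $A A^{*}=I_{m-1}$; hence the rows of $A$ are orthonormal in $\mathbb{C}^m$ a.e., and in particular the entries of $A$ are bounded by $1$, so they lie in $L_\infty^{+}(\mathbb{T})$. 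Because $F^{-1}$ differs from the identity only in its last row (whose entries are $-\zeta_j/f^{+}$ and $1/f^{+}$), the first $m-1$ rows of $U$ coincide with $A$, which is already of the form $(17)$.

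The crux is the bottom row $\vec u_m$ of $U$. Since $U$ is unitary, $\vec u_m$ is a unit vector orthogonal to each (orthonormal) row $A_{i\cdot}$ of $A$, hence determined up to a unimodular factor. I would introduce $\vec v$ with $v_k=(-1)^{m+k}\overline{\det A_{\widehat{k}}}$, where $A_{\widehat{k}}$ is $A$ with its $k$th column deleted: then $\langle\vec v,A_{i\cdot}\rangle_m$ equals the conjugate of the cofactor expansion (along the last row) of the $m\times m$ matrix obtained by appending $A_{i\cdot}$ below $A$, which vanishes since two of its rows coincide; and Cauchy--Binet gives $\|\vec v\|_{\mathbb{C}^m}^{2}=\sum_k|\det A_{\widehat{k}}|^{2}=\det(A A^{*})=1$. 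Consequently $\vec u_m=\lambda\vec v$ with $|\lambda|=1$, and expanding $\det U=1$ along the last row of $U$ yields $\lambda\sum_k|\det A_{\widehat{k}}|^{2}=1$, so $\lambda=1$. Thus, setting $u^{+}_{mk}:=(-1)^{m+k}\det A_{\widehat{k}}\in L_\infty^{+}$ (a polynomial in the entries of $A$), we get $(\vec u_m)_k=\overline{u^{+}_{mk}}$, i.e. $U$ has exactly the form $(16)$--$(17)$. Taking $U_F:=U$ gives $F U_F=F_c^{+}\in\mathcal{O}_2^{0}$, the canonical spectral factor of $F F^{*}$, which proves existence; and any $U_F$ as in the statement satisfies $F U_F=F_c^{+}$ with $F$ invertible a.e., whence $U_F=F^{-1}F_c^{+}$ is unique. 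I expect the only genuine obstacle to be this last-row argument---showing that the co-analyticity of the bottom row comes for free---and the minor/Cauchy--Binet identity is what settles it.
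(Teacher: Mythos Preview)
Your argument is correct and essentially self-contained. It is, however, a genuinely different route from the one the paper actually carries out.

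In this paper Theorem~1 is proved as a by-product of Theorem~2\,(b): one constructs explicit polynomial unitary matrices $U_{F^{(N)}}$ with the structure (15), (23), establishes the relations (24), (25) for each $N$, and then uses compactness of Hankel operators to show that $U_{F^{(N)}}\rightrightarrows U_F:=F^{-1}F_c^{+}$. The structure (15), (16) of $U_F$ is inherited from the approximants by passing to the limit (bounded entries in $L_\infty^{+}$ converging in measure stay in $L_\infty^{+}$). Uniqueness is argued exactly as you do, from the invertibility of $F$ and the uniqueness of $F_c^{+}$.

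Your proof bypasses the entire approximation machinery: you define $U_F:=F^{-1}F_c^{+}$ directly from the Wiener theorem, read off $U_F\in\mathcal{SU}_m(\mathbb{T})$ and the first $m-1$ rows from $AA^{*}=I_{m-1}$, and then use the minor/Cauchy--Binet identity to show that the last row is necessarily $(\overline{u_{mk}^{+}})_k$ with $u_{mk}^{+}=(-1)^{m+k}\det A_{\widehat{k}}\in L_\infty^{+}$. This is precisely the ``relatively easy'' non-constructive existence proof the paper alludes to (reference~[3]) but does not reproduce; your cofactor argument makes the co-analyticity of the last row completely explicit. What you gain is brevity and independence from Theorem~2; what you lose is the constructive content---the paper's approach simultaneously proves Theorem~1 and supplies the convergent computational scheme that is the point of the algorithm.
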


The proof of Theorem 1 relying on the existence of  spectral
factorization is relatively easy (see [3]). The core of the
proposed matrix spectral factorization method is the constructive
proof of Theorem 1 based on the following idea. We approximate
$F(t)$ in $L_2$ cutting off the tails of Fourier expansions of the
functions $\zeta_j(t)$, $j=1,2,\ldots,m-1$, and $f^+(t)$. Namely,
for a matrix function of the form (13), (14), let $F^{(N)}(t)$ be
$P_NF(t)$, i.e.
\begin{equation}
F^{(N)}(t)=\begin{pmatrix}1&0&0&\cdots&0&0\\
          0&1&0&\cdots&0&0\\
           0&0&1&\cdots&0&0\\
           \cdot&\cdot&\cdot&\cdots&\cdot&\cdot\\
           0&0&0&\cdots&1&0\\
           \zeta_{1}^{(N)}(t)&\zeta_{2}^{(N)}(t)&\zeta_{3}^{(N)}(t)&\cdots&\zeta_{m-1}^{(N)}(t)&f_{(N)}^+(t)
           \end{pmatrix},
\end{equation}
where
$$
f_{(N)}^+(t)=\sum_{n=0}^N c_n(f^+)t^n,\;\text{ and
}\;\zeta_j^{(N)}(t)=\sum_{n=-N}^N c_n(\zeta_j)t^n,
$$
 $j=1,2,\ldots,m-1$. It is obvious that,
 \begin{equation}
\|\zeta_j^{(N)}(t)-\zeta_j(t)\|_{L_2}\to
0,\;\;\|f_{(N)}^+(t)-f^+(t)\|_{L_2}\to 0
\end{equation}
or, equivalently,
\begin{equation}
\|F^{(N)}(t)-F(t)\|_{L_2}\to 0\;\;\text{as}\;\;N\to \infty.
\end{equation}
We will multiply (18) by the polynomial unitary matrix function
which eliminates the  Fourier coefficients with negative indices
of the product. Furthermore, we prove the following theorem for
matrix-functions $F^{(N)}(t)$, $N=1,2,\ldots$, which involves the
limiting case too.

\begin{theorem} $\mathbf{(a)}$
Let $N$ be any positive integer, and let a  matrix function
$F^{(N)}(t)\in \mathcal{P}_N$ of the form $ (18) $ be such that
\begin{equation}
\zeta_j^{(N)}(t)\in \mathcal{P}_N,\;j=1,2,\ldots, m-1,\text{ and }
f_{(N)}^{+}(t)\in \mathcal{P}_N^+,\;f_{(N)}^+(0)>0.
\end{equation}
 Then there exists and one can explicitly construct
\begin{equation}
U_{F^{(N)}}(t)\in \mathcal{S} \mathcal{U}_m(\mathbb{T})
\end{equation}
 of the form $ (15) $  such that
\begin{gather}
u^+_{ij}(t)\in \mathcal{P}_N^+ ,\;\;i,j=1,2,\ldots,m,
\end{gather}
  \begin{equation}
F^{(N)}(t) U_{F^{(N)}}(t)\in \mathcal{P}^+,
\end{equation}
and
 \begin{equation}
F^{(N)}U_{F^{(N)}}(0)>0.
\end{equation}

$\mathbf{(b)}$ Given an arbitrary sequence of  matrix functions
$F^{(N)}(t)$, $N=1,2,\ldots$, of the form $(18)$, $(21)$ which
converges in $L_2$ to $F(t)$ $($i.e. $ (20) $ holds$)$ of the form
$ (13) $, $ (14) $, we have
 \begin{equation}
\|F^{(N)}(t)U_{F^{(N)}}(t)-F_c^+(t)\|_{H_2}\to 0\;\;\text{ as }
N\to\infty.
\end{equation}
Furthermore, the sequence $U_{F^{(N)}}(t)$, $N=1,2,\ldots$,
 is convergent in measure. The
limiting matrix function $U(t)\in \mathcal{S}
\mathcal{U}_m(\mathbb{T})$ satisfies the conditions imposed on
$U_F(t)$ in Theorem 1, and therefore $U(t)$ coincides with
$U_F(t)$. Consequently, we have
\begin{equation}
U_{F^{(N)}}(t)\rightrightarrows U_F(t).
\end{equation}
\end{theorem}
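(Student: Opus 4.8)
The argument has two essentially independent parts, the explicit construction of (a) and a limiting argument for (b); I take them in turn. For part (a) I would first peel off the normalisation $F^{(N)}U_{F^{(N)}}(0)>0$, reducing it to the construction of a matrix $U_{0}(t)\in\mathcal{S}\mathcal{U}_{m}(\mathbb{T})$ of the form (15), with entries $u_{ij}^{+}\in\mathcal{P}_{N}^{+}$, such that $F^{(N)}U_{0}\in\mathcal{P}^{+}$. Indeed, since $\det F^{(N)}=f_{(N)}^{+}\in\mathcal{P}_{N}^{+}\subset L_{2/m}^{+}$ (immediate from (18)), Corollary 1 gives $\det(F^{(N)}U_{0})(z)=f_{(N)}^{+}(z)$, so $(F^{(N)}U_{0})(0)$ is non-singular with determinant $f_{(N)}^{+}(0)>0$; letting $C$ be the unique constant unitary matrix with $(F^{(N)}U_{0})(0)C>0$ one has $\det C=1$ (as $|\det C|=1$ while $\det(F^{(N)}U_{0})(0)$ and the determinant of a positive definite matrix are positive reals), and $U_{F^{(N)}}:=U_{0}C$ retains the shape (15), has polynomial entries in $\mathcal{P}_{N}^{+}$, belongs to $\mathcal{S}\mathcal{U}_{m}(\mathbb{T})$, satisfies $F^{(N)}U_{F^{(N)}}\in\mathcal{P}^{+}$ and now also $F^{(N)}U_{F^{(N)}}(0)>0$. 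The construction of $U_{0}$ itself I would do by induction on $N$, each step reducing to the case $m=2$: writing the linear equations on the top Fourier coefficients of the unknown analytic entries that annihilate the lowest negative harmonic of $F^{(N)}U_{0}$, solving them by back-substitution (solvability comes from $f_{(N)}^{+}(0)>0$), and checking by a direct computation that the row norms $\sum_{k}|u_{ik}^{+}(t)|^{2}$ of the resulting matrix are constant, so it can be renormalised into an element of $\mathcal{S}\mathcal{U}_{m}(\mathbb{T})$; iterating removes the remaining negative harmonics. The delicate point in (a) is merely the bookkeeping that keeps the intermediate matrices of the prescribed triangular shape, unitary, and of degree at most $N$.

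For part (b), set $G_{N}:=F^{(N)}U_{F^{(N)}}\in\mathcal{P}^{+}\subset H_{2}$. From $G_{N}G_{N}^{*}=F^{(N)}(F^{(N)})^{*}$ one has $\|G_{N}\|_{H_{2}}^{2}=\int_{\mathbb{T}}\mathrm{tr}\big(F^{(N)}(F^{(N)})^{*}\big)=\|F^{(N)}\|_{L_{2}}^{2}$, which by (20) converges to $\|F\|_{L_{2}}^{2}=\|F_{c}^{+}\|_{H_{2}}^{2}$ (the last equality because $F_{c}^{+}(F_{c}^{+})^{*}=FF^{*}$); in particular $\{G_{N}\}$ is bounded in $H_{2}$. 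I would establish (26) and (27) together by showing that every subsequence of the indices contains a further subsequence along which $G_{N}\to F_{c}^{+}$ in $H_{2}$ and $U_{F^{(N)}}\rightrightarrows U_{F}$; uniqueness of the limits then forces the full sequences to converge. Along a subsequence the entries of $U_{F^{(N)}}$, being bounded by $1$ in $L_{\infty}$, converge weak-$*$ in $L_{\infty}$ to a matrix function $U$ with $U(t)U(t)^{*}\le I_{m}$ a.e.\ (weak lower semicontinuity). Since $\|F^{(N)}-F\|_{L_{2}}\to0$ and $\|U_{F^{(N)}}\|_{L_{\infty}}=1$, we get $(F^{(N)}-F)U_{F^{(N)}}\to0$ in $L_{2}$, while $FU_{F^{(N)}}\rightharpoonup FU$ weakly in $L_{2}$ (testing against $\Psi\in L_{2}$ and transferring $F$ to the other factor, against $F^{*}\Psi\in L_{1}$); hence $G_{N}\rightharpoonup G:=FU$ weakly in $H_{2}$.

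The crux, and the step I expect to be the hardest, is identifying $G$ with $F_{c}^{+}$, i.e.\ excluding loss of mass in this weak limit. Here Corollary 1 enters once more: $\det G_{N}(z)=f_{(N)}^{+}(z)$ on $\mathbb{D}$, and weak convergence in $H_{2}$ gives $G_{N}(z)\to G(z)$ locally uniformly on $\mathbb{D}$, whence $\det G(z)=\lim f_{(N)}^{+}(z)=f^{+}(z)$; thus $\det G=f^{+}\in\mathcal{O}_{2}^{0}\subset\mathcal{O}_{2/m}$, i.e.\ $G$ is an outer matrix function. On the other hand $GG^{*}=F(UU^{*})F^{*}\le FF^{*}$ a.e., while $\det(GG^{*})=|f^{+}|^{2}=\det(FF^{*})$; since $0\le A\le B$ with $B>0$ and $\det A=\det B$ forces $A=B$, we conclude $GG^{*}=FF^{*}$, hence $UU^{*}=I_{m}$ a.e.\ and $G$ is an outer spectral factor of $FF^{*}$. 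Finally $G(0)=\lim G_{N}(0)$ is a limit of positive definite matrices, so $G(0)\ge0$, and it is non-singular since $\det G(0)=f^{+}(0)>0$; therefore $G(0)>0$, $G$ is the canonical spectral factor of $FF^{*}$, i.e.\ $G=F_{c}^{+}$, and $U=F^{-1}G=F^{-1}F_{c}^{+}=U_{F}$ by Theorem 1 (so $U$ indeed meets all the conditions imposed on $U_{F}$ there).

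It remains to upgrade the two weak statements. Since $G_{N}\rightharpoonup F_{c}^{+}$ in the Hilbert space $H_{2}$ and $\|G_{N}\|_{H_{2}}\to\|F_{c}^{+}\|_{H_{2}}$, the convergence is strong, which is (26). For (27), write $U_{F^{(N)}}=(F^{(N)})^{-1}G_{N}$ and pass to a further subsequence along which $\zeta_{j}^{(N)}\to\zeta_{j}$ and $f_{(N)}^{+}\to f^{+}$ a.e.; then $F^{(N)}\to F$ a.e.\ and, since $\det F=f^{+}\ne0$ a.e., $(F^{(N)})^{-1}\to F^{-1}$ a.e., hence in measure, and together with $G_{N}\to F_{c}^{+}$ in $L_{2}$ this yields $U_{F^{(N)}}\to F^{-1}F_{c}^{+}=U_{F}$ in measure. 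As $F_{c}^{+}$ and $U_{F}$ do not depend on the chosen subsequences, the full sequences converge, which is (27) and completes part (b).
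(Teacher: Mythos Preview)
Your treatment of part \textbf{(b)} is correct and takes a genuinely different route from the paper. The paper extracts a.e.\ convergent subsequences of $U_{F^{(N)}}$ via the compactness of the Hankel operators $H_\zeta:L_\infty^+\to L_2^-$ and $H_f^*:L_\infty^-\to L_2^+$, applied to the relations coming from the system (28), and then passes to the limit in (22), (24), (25) and Corollary~1 to identify the limit as $F_c^+$ through Lemma~2. You instead use weak-$*$ compactness in $L_\infty$, identify the weak $H_2$-limit $G$ by combining $\det G=f^+\in\mathcal{O}_2^0$ with $GG^*\le FF^*$ and $\det(GG^*)=\det(FF^*)$ (hence $GG^*=FF^*$), and upgrade weak to strong convergence via $\|G_N\|_{H_2}\to\|F_c^+\|_{H_2}$. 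Your argument is more conceptual and avoids Hankel operators altogether; the paper's approach stays closer to the algebraic structure of the system (28). Two small points: say explicitly that the weak-$*$ limit $U$ inherits the shape (15)--(16) because $L_\infty^+$ is weak-$*$ closed; and note that your identification $U=F^{-1}F_c^+$ actually \emph{proves} Theorem~1 (existence and uniqueness) rather than appealing to it, since in the paper Theorem~1 is obtained as a by-product of Theorem~2(b).

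Your part \textbf{(a)}, however, has a genuine gap. The final normalisation step (producing $U_{F^{(N)}}=U_0C$ with $\det C=1$ from Corollary~1 and the positivity of $f_{(N)}^+(0)$) is correct and matches the paper. But the core construction of $U_0$ is not carried out. First, the claim ``the row norms $\sum_k|u_{ik}^+(t)|^2$ are constant, so it can be renormalised into an element of $\mathcal{S}\mathcal{U}_m(\mathbb{T})$'' is insufficient: constant row norms do not give a unitary matrix; you need the \emph{entire} Gram matrix $V(t)^*V(t)$ to be constant in $t$. This is exactly the content of the paper's Lemma~3, which shows that for any two solutions $\mathbf{u}^+,\mathbf{v}^+$ of the system (28) the pairing $\langle\widetilde{\mathbf{u}^+}(t),\widetilde{\mathbf{v}^+}(t)\rangle_m$ is constant; the proof exploits the conjugated last row in (15) in a non-obvious way and is the algebraic heart of (a). Second, ``induction on $N$, each step reducing to the case $m=2$'' is not substantiated: you do not say what the degree-one unitary factors look like, how a single such factor annihilates the lowest negative harmonic of the \emph{entire} last row of $F^{(N)}U_0$ while preserving the shape (15), nor why the product of $N$ such factors keeps the entries in $\mathcal{P}_N^+$. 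The paper proceeds directly: it writes down $m$ linear systems $\mathbb{S}_j$ (equation (38)), reduces each to a single positive-definite $(N{+}1)\times(N{+}1)$ system (44) with coefficient matrix $\Delta=\sum_i\Theta_i\Theta_i^*+I$, assembles the $m$ solutions into $V(t)$, invokes Lemma~3 to get $V^*V=\mathrm{const}$, shows $\det V(t)=\mathrm{const}$ by a separate computation with $(F^{-1})^*V$, and then normalises by $V(1)^{-1}$.
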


The constructive proof of Theorem 2 $\mathbf{(a)}$ given in Sect
5, which computes explicitly and in a fast reliable way  the
coefficients of the functions $u^+_{ij}(t)\in \mathcal{P}_N^+$ in
 (23), is the essence of the proposed algorithm. The
 part $\mathbf{(b)}$ of the theorem involves the algorithm convergence
properties and is proved in Appendix A. We point out the fact that
Theorem 2 $\mathbf{(b)}$ includes also the proof of Theorem 1.

\section{A Constructive Proof of Theorem 2  $\mathbf{(a)}$ }

Throughout this section it is assumed that $N$ is fixed and
$\zeta_j(t):=\zeta_j^{(N)}(t)$, $f^+(t):=f_{(N)}^+(t)$, and
$F(t):=F^{(N)}(t)$.

For given functions $\zeta_j(t)$, $j=1,2,\ldots,m-1$, and $f^+(t)$
satisfying (21), we consider the following system of $m$
conditions, which plays a key role in the proof,
\begin{equation}
\begin{cases} \zeta_1(t)x^+_m(t)-f^+(t)\overline{x^+_1(t)}\in \mathcal{P}^+,\\
              \zeta_2(t)x^+_m(t)-f^+(t)\overline{x^+_2(t)}\in \mathcal{P}^+,\\
              \cdot\hskip+1cm \cdot\hskip+1cm \cdot\\
              \zeta_{m-1}(t)x^+_m(t)-f^+(t)\overline{x^+_{m-1}(t)}\in \mathcal{P}^+,\\
              \zeta_1(t)x^+_1(t)+\zeta_2(t)x^+_2(t)+\ldots+\zeta_{m-1}(t)x^+_{m-1}(t)
              +f^+(t)\overline{x^+_m(t)}\in \mathcal{P}^+,
         \end{cases}
\end{equation}
where the vector function
$\big(x^+_1(t),x^+_2(t),\ldots,x^+_m(t)\big)^T$ is unknown.

We say that a vector function
\begin{equation}
\mathbf{u}^+(t)=\big(u^+_1(t),u^+_2(t),\ldots,u^+_m(t)\big)^T\in
\mathcal{P}_N^+
\end{equation}
is a solution of (28) if and only if all the conditions in (28)
are satisfied whenever $x^+_i(t)=u^+_i(t)$, $i=1,2,\ldots,m$.
Observe that the set of solutions of (28) is a linear subspace of
$m$-dimensional vector-valued functions defined on $\mathbb{T}$.

For the vector function (29), we define the modified vector
function $\widetilde{\mathbf{u}^+}(t)$ as
\begin{equation}
\widetilde{\mathbf{u}^+}(t)=\big(u^+_1(t),u^+_2(t),\ldots,\overline{u^+_m(t)}\big)^T.
\end{equation}
It is assumed that the modification of (30) is (29).

We make essential use of the following
\begin{lemma}
Let $(21)$  hold and  let
\begin{equation}
\mathbf{u}^+(t)=\big(u^+_1(t),u^+_2(t),\ldots,u^+_m(t)\big)^T\in
\mathcal{P}_N^+ \text{ and }
\mathbf{v}^+(t)=\big(v^+_1(t),v^+_2(t),\ldots,v^+_m(t)\big)^T\in
\mathcal{P}_N^+
\end{equation}
be two $($possibly identical$)$ solutions of the system $(28)$.
Then
$\langle\widetilde{\mathbf{u}^+}(t),\widetilde{\mathbf{v}^+}(t)\rangle_m$
is the same for each $t\in \mathbb{T}$, i.e.
\begin{equation}
\sum_{i=1}^{m-1}u^+_i(t)\overline{v_i^+(t)}+\overline{u_m^+(t)}v_m^+(t)=\operatorname{const}.
\end{equation}
\end{lemma}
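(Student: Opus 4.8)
The plan is to show that the scalar-valued function
$h(t):=\sum_{i=1}^{m-1}u^+_i(t)\overline{v_i^+(t)}+\overline{u_m^+(t)}v_m^+(t)=\langle\widetilde{\mathbf u^+}(t),\widetilde{\mathbf v^+}(t)\rangle_m$
is simultaneously in $L^+_1(\mathbb T)$ and in $L^-_1(\mathbb T)$, hence is a constant (its only nonzero Fourier coefficient can be the one with index $0$). The plus side is immediate: since each $u^+_i,\overline{u^+_m},v^+_i,\overline{v^+_m}$ — wait, no: $\overline{v^+_i}$ is in $\mathcal P^-$, so $h$ is not obviously in $\mathcal P^+$. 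So instead I would exploit the system (28) directly, using the defining conditions for $\mathbf u^+$ to rewrite $h$ as something manifestly analytic, and the conditions for $\mathbf v^+$ to rewrite $h$ as something manifestly anti-analytic.

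First I would multiply the $j$-th condition of (28) for $\mathbf u^+$ (for $j=1,\dots,m-1$) by $\overline{v^+_j(t)}$ and the last condition for $\mathbf u^+$ by $v^+_m(t)$, then add all $m$ of them. Since $\overline{v^+_j}\in\mathcal P^-$ and $v^+_m\in\mathcal P^+$, and since $\mathcal P^-\cdot\mathcal P^+$ is not contained in either $\mathcal P^+$ or $\mathcal P^-$, this by itself won't conclude anything — the point is rather to combine it with the analogous manipulation for $\mathbf v^+$ so that the "bad" cross terms cancel. Concretely: take $\sum_{j=1}^{m-1}\overline{v^+_j}\cdot\big(\zeta_j x^+_m-f^+\overline{x^+_j}\big)$ with $x=u$, plus $v^+_m\cdot\big(\sum_j\zeta_j x^+_j+f^+\overline{x^+_m}\big)$ with $x=u$; this lies in $\mathcal P^+\cdot$(something). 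Then do the symmetric thing with the roles of $u,v$ swapped and with complex conjugation, to land in $\mathcal P^-$. Expanding both and subtracting, the terms involving $\zeta_j$ (which are in $\mathcal P_N$, not one-sided) should cancel in pairs, and what survives is a multiple of $f^+$ times exactly $h(t)$ (and its conjugate). Since $f^+(t)\ne0$ a.e. (as $f^+\in\mathcal P^+_N$ with, one checks from (21), no boundary zeros — or at least $f^+\not\equiv0$), dividing by $f^+$ would show $h\in L^+_1$ from one combination and $\overline h\in L^+_1$, i.e. $h\in L^-_1$, from the other. A function in $L^+_1\cap L^-_1$ is constant, giving (32).

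The main obstacle I anticipate is the bookkeeping in the cancellation step: making precise which products land in $\mathcal P^+$ versus $\mathcal P^-$, and verifying that after forming the right linear combination every term that is not one-sided actually cancels, leaving a clean scalar multiple of $h$. A subtlety is that $f^+$ may have zeros on $\mathbb T$ in the limiting setting, but here we are in the polynomial case (21) where $f^+_{(N)}(0)>0$; even so, to divide by $f^+$ safely I would argue membership in $L_1^\pm$ before dividing, or note that $g\cdot f^+\in\mathcal P^+$ together with $g\cdot f^+\in\mathcal P^-$ already forces $g\cdot f^+$ constant, and since $f^+\not\equiv 0$ and $g$ has controlled degree this forces $g$ itself constant — so I can avoid pointwise division entirely. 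I would also double-check the harmless-looking claim that the set of solutions of (28) is a linear subspace (stated just before the lemma), since the proof implicitly uses that $\mathbf u^+\pm\mathbf v^+$ and more general combinations are again solutions — though in fact the bilinear-form argument above only needs the two given solutions, so linearity is not strictly required for (32) itself.
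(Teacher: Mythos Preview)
Your overall strategy --- show that $h(t)=\sum_{i=1}^{m-1}u_i^+\overline{v_i^+}+\overline{u_m^+}v_m^+$ lies in both $\mathcal P^+$ and $\mathcal P^-$ by extracting $f^+h$ from the system (28) and then swapping the roles of $u$ and $v$ --- is exactly the paper's. Your remark that $h\in\mathcal P_N$ and $f^+(0)>0$ let you pass from $f^+h\in\mathcal P^+$ to $h\in\mathcal P_N^+$ without pointwise division is also correct and is precisely what the paper uses.

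The gap is in the specific linear combination you propose. If you multiply $u$'s $j$th condition by $\overline{v_j^+}$ (for $j<m$) and $u$'s last condition by $v_m^+$, expansion produces terms $u_m^+\sum_j\zeta_j\overline{v_j^+}$ and $v_m^+\sum_j\zeta_j u_j^+$, and neither of these cancels against anything in the swapped-and-conjugated expression (which carries $\overline{\zeta_j}$, not $\zeta_j$). So the ``$\zeta_j$ terms should cancel in pairs'' step fails as written. The paper's fix is to \emph{mix} the two solutions from the outset rather than use all of $u$'s conditions: take $v$'s first $m-1$ conditions, each multiplied by $u_i^+\in\mathcal P^+$, together with $u$'s last condition multiplied by $v_m^+\in\mathcal P^+$. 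All multipliers are in $\mathcal P^+$, so every product stays in $\mathcal P^+$; both expressions contain the identical term $\sum_i\zeta_i u_i^+v_m^+$, and subtracting gives $f^+h\in\mathcal P^+$ in one line. Swapping $u\leftrightarrow v$ yields $f^+\bar h\in\mathcal P^+$, hence $h\in\mathcal P_N^+\cap\mathcal P_N^-$.
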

\begin{proof}
Substituting the functions $v^+$ into the first $m-1$ conditions
and the functions $u^+$ in the last condition of (28), and then
multiplying the first $m-1$ conditions by $u^+$ and the last
condition by $v_m^+$,  we get
$$
\begin{cases} \zeta_1v_m^+u_1^+-f^+\overline{v_1^+}u_1^+\in \mathcal{P}^+,\\
              \zeta_2v_m^+u_2^+-f^+\overline{v_2^+}u_2^+\in \mathcal{P}^+,\\
              \cdot\hskip+1cm \cdot\hskip+1cm \cdot\\
              \zeta_{m-1}v_m^+u_{m-1}^+-f^+\overline{v_{m-1}^+}u_{m-1}^+\in \mathcal{P}^+,\\
              \zeta_1u_1^+v_m^++\zeta_2u_2^+v_m^++\ldots+\zeta_{m-1}u_{m-1}^+v_m^+
              +f^+\overline{u_m^+}v_m^+\in \mathcal{P}^+.
         \end{cases}
$$
Subtracting the first $m-1$ conditions from the last condition in
the latter system, we get
\begin{equation}
f^+(t)\left(\sum_{i=1}^{m-1}
u^+_i(t)\overline{v_i^+(t)}+\overline{u_m^+(t)}v_m^+(t)\right)\in
\mathcal{P}^+.
\end{equation}
Since the second multiplier in (33) belongs to $\mathcal{P}_N$
(see (31)), (21) and (33) imply that
\begin{equation}
\sum_{i=1}^{m-1}u^+_i(t)\overline{v_i^+(t)}+\overline{u_m^+(t)}v_m^+(t)\in
\mathcal{P}_N^+.
\end{equation}
We can interchange the roles of $u$ and $v$ in the above
discussion to get in a similar manner that
\begin{equation}
\sum_{i=1}^{m-1}v^+_i(t)\overline{u_i^+(t)}+\overline{v_m^+(t)}u_m^+(t)\in
\mathcal{P}_N^+.
\end{equation}
It follows from relations (34) and (35) that the function in (32)
belongs to $\mathcal{P}_N^+\cap \mathcal{P}_N^-$, which implies
(32).
\end{proof}

The proof of Theorem 2 {\bf (a)} proceeds as follows. We search
for a nontrivial polynomial solution
\begin{equation}
\mathbf{x}(t)=\big(x^+_1(t),x^+_2(t),\ldots,x^+_m(t)\big)^T\in
\mathcal{P}_N^+
\end{equation}
of the system (28), where
\begin{equation}
x_i^+(t)=\sum_{n=0}^N a_{in} t^n,\;\;\;i=1,2,\ldots, m,
\end{equation}
and explicitly determine the coefficients $a_{in}$. We will find
such $m$ linearly independent  solutions of (28) (see (51) below).

Equating all the non-positive Fourier coefficients of the
functions on the left-hand side of (28) to zero, except the $0$th
coefficient of the $j$th function which we equate to $1$, we get
the following system of algebraic equations in the block matrix
form which we denote by $\mathbb{S}_j$:
\begin{equation}\mathbb{S}_j:=
    \begin{cases}\Gamma_1\cdot X_m-D\cdot\overline{X_1}={\bf 0}, \\
    \Gamma_2\cdot X_m-D\cdot\overline{X_2}={\bf 0}, \\
    \cdot\;\;\;\;\;   \cdot\;\;\;\;\;    \cdot\\
    \Gamma_j\cdot X_m-D\cdot\overline{X_j}={\bf 1}, \\
    \cdot\;\;\;\;\;   \cdot\;\;\;\;\;    \cdot\\
    \Gamma_{m-1}\cdot X_m-D\cdot\overline{X_{m-1}}={\bf 0}, \\
    \Gamma_1\cdot X_1+\Gamma_2\cdot X_2+\ldots+\Gamma_{m-1}\cdot
    X_{m-1}+D\cdot\overline{X_m}={\bf 0}\;. \end{cases}
\end{equation}
Here the following matrix notation is used:
\begin{equation}
D=\begin{pmatrix}d_0&d_1&d_2&\cdots&d_{N-1}&d_N\\
        0&d_0&d_1&\cdots&d_{N-2}&d_{N-1}\\
        0&0&d_0&\cdots&d_{N-3}&d_{N-2}\\
        \cdot&\cdot&\cdot&\cdots&\cdot&\cdot\\
        0&0&0&\cdots&0&d_0\end{pmatrix},\;\;
\Gamma_i=\begin{pmatrix}\gamma_{i0}&\gamma_{i1}&\gamma_{i2}&\cdots&\gamma_{i,N-1}&\gamma_{iN}\\
        \gamma_{i1}&\gamma_{i2}&\gamma_{i3}&\cdots&\gamma_{iN}&0\\
        \gamma_{i2}&\gamma_{i3}&\gamma_{i4}&\cdots&0&0\\
        \cdot&\cdot&\cdot&\cdots&\cdot&\cdot\\
        \gamma_{iN}&0&0&\cdots&0&0\end{pmatrix},
\end{equation} where
$$
 f^+(z)=\sum_{n=0}^N d_n z^n \;\text{ and }\; \zeta_i(t)=
 \sum_{n=-N}^N\gamma_{in}t^{-n},\;\;i=1,2,\ldots,m-1,
$$
\begin{equation}
{\bf 0}=(0,0,\ldots,0)^T\in \mathbb{C}^{N+1}, \text{ and }{\bf
1}=(1,0,0,\ldots,0)^T\in \mathbb{C}^{N+1}.
\end{equation}
The column vectors
\begin{equation}
X_i=(a_{i0},a_{i1},\ldots,a_{iN})^T,\;\;i=1,2,\ldots,m,
\end{equation}
(see (37)) are unknowns.

\begin{remark}
We recall that if $(X_1,X_2,\ldots,X_m)$ defined by $ (41) $ is a
solution of the system $ (38)$, then the vector function $ (36)$
defined by $ (37)$ is a solution of the system $ (28)$.
\end{remark}

We need to show that the system $\mathbb{S}_j$ (see (38))  has a
solution for each $j=1,2,\ldots,m$.

Since $f^+(0)>0$ (see (21)), $\frac{1}{f^+}$ can be represented as
a power series in the neighborhood of $0$
$$  \frac{1}{f^+(z)}=\sum_{n=0}^\infty b_n z^n,    $$
where $b_0=(f^+(0))^{-1}>0$, and the inverse of the matrix $D$ is
\begin{equation}  D^{-1}=\begin{pmatrix}b_0&b_1&b_2&\cdots&b_{N-1}&b_N\\
        0&b_0&b_1&\cdots&b_{N-2}&b_{N-1}\\
        0&0&b_0&\cdots&b_{N-3}&b_{N-2}\\
        \cdot&\cdot&\cdot&\cdots&\cdot&\cdot\\
        0&0&0&\cdots&0&b_0\end{pmatrix}.    \end{equation}

Determining $X_i$, $i=1,2,\ldots,m-1$, from the first $m-1$
equations of (38),
\begin{equation}
X_i=\overline{D^{-1}}\;\overline{\Gamma_i}\;\overline{X_m}-\delta_{ij}\overline{D^{-1}}\;{\bf
1},\;\;i=1,2,\ldots,m-1,
\end{equation}
 and then substituting them into the last equation of (38), we get
$$
\Gamma_1\,\overline{D^{-1}}\;\overline{\Gamma_1}\;\overline{X_m}+\Gamma_2\,\overline{D^{-1}}\;\overline{\Gamma_2}\;
\overline{X_m}
+\ldots+\Gamma_{m-1}\,\overline{D^{-1}}\;\overline{\Gamma_{m-1}}\;\overline{X_m}+D\;\overline{X_m}=
\Gamma_j\,\overline{D^{-1}}\,{\bf 1}
$$
(it is  assumed that $\Gamma_m=\overline{D}$, i.e. the right-hand
side is equal to ${\bf 1}$ when $j=m$) or, equivalently,
\begin{equation}
(\Theta_1\,\overline{\Theta_1}+\Theta_2\,\overline{\Theta_2}
+\ldots+\Theta_{m-1}\,\overline{\Theta_{m-1}}+I_{N+1})\,\overline{X_m}
=D^{-1}\,\Gamma_j\,\overline{D^{-1}}\,{\bf 1},
\end{equation}
where
\begin{equation}
\Theta_i=D^{-1}\,\Gamma_i\,,\;\;i=1,2,\ldots,m-1.
\end{equation}
For each $j=1,2,\ldots,m$, (44) is a linear algebraic system of
$N+1$ equations with $(N+1)$ unknowns.

The matrices $\Theta_i$, $i=1,2,\ldots,m-1$, are symmetric since
their entries are (see (45), (42), and (39))
\begin{equation}
\Theta_i[k,l]=\Theta_i[l,k]=\begin{cases}0\;\;\;&\text{for}\;\;
            k+l>N,\\
    \displaystyle\sum_{n=0}^{N-(k+l)}b_n\gamma_{i,k+l+n}\;\;&\text{for}\;\;
            k+l\leq N.\end{cases}
\end{equation} Therefore
$\Theta_i\overline{\Theta_i}=\Theta_i\Theta_i^*$,
$i=1,2,\ldots,m-1$, are non-negative definite and the coefficient
matrix of the system (44)
\begin{equation}
\Delta=\Theta_1{\Theta_1}^*+\Theta_2{\Theta_2}^*
+\ldots+\Theta_{m-1}{\Theta_{m-1}^*}+I_{N+1}
\end{equation}
(which is the same for each $j=1,2,\ldots,m$) is positive definite
(with all eigenvalues larger than or equal to 1). Consequently,
$\Delta$ is nonsingular, $\det\Delta\geq1$, and the system (44)
 has a unique solution for each $j$. Furthermore, $\Delta$  has a displacement structure of rank $m$
 (see Appendix B) which reduces the computational burden for  solution of the system (44) from
$O(N^3)$ to $O(mN^2)$ (see [14; App. F]).

Finding the matrix vector $\overline{X_m}$ from (44) and then
determining $X_1,X_2,\ldots,X_{m-1}$ from (43), we get the unique
solution of $\mathbb{S}_j$. To indicate its dependence on $j$, we
denote the solution of $\mathbb{S}_j$ by
$(X_1^j,X_2^j,\ldots,X_{m-1}^j,X_m^j)$,
\begin{equation}
X_i^j:=(a_{i0}^j,a_{i1}^j,\ldots, a_{iN}^j)^T,
\;\;\;i=1,2,\ldots,m,
\end{equation}
so that if we construct a matrix function $V(t)$,
\begin{equation}
V(t)=\begin{pmatrix}v^+_{11}(t)&v^+_{12}(t)&\cdots&v^+_{1,m-1}(t)&v^+_{1m}(t)\\
                 v^+_{21}(t)&v^+_{22}(t)&\cdots&v^+_{2,m-1}(t)&v^+_{2m}(t)\\
           \vdots&\vdots&\vdots&\vdots&\vdots\\
           v^+_{m-1,1}(t)&v^+_{m-1,2}(t)&\cdots&v^+_{m-1,m-1}(t)&v^+_{m-1,m}(t)\\[3mm]
           \overline{v^+_{m1}(t)}&\overline{v^+_{m2}(t)}&\cdots&\overline{v^+_{m,m-1}(t)}&\overline{v^+_{mm}(t)}\\
           \end{pmatrix}
\end{equation}
by letting (see (48))
\begin{equation}
v^+_{ij}(t)=\sum_{n=0}^N a_{in}^j t^n, \;\;\;1\leq i,j\leq m
\end{equation}
\big(note that (49) has the structure required in Theorem 2 {\bf
(a)}; see (15), (23)\big), then its modified columns
$\widetilde{V^1}(t),
\widetilde{V^2}(t),\ldots,\widetilde{V^{m-1}}(t)$, and
$\widetilde{V^m}(t)$,
\begin{equation}
\widetilde{V^j}(t)=(v^+_{1j}(t),v^+_{2j}(t),\ldots,v^+_{m-1,j}(t),v^+_{mj}(t)),\;\;j=1,2,\ldots,m,
\end{equation}
are  solutions of the system (28) (see Remark 1). Hence, because
of the last equation in (28),
 \begin{equation}
F(t) V(t)\in \mathcal{P}^{+}
\end{equation} and, by virtue of Lemma 3,
\begin{equation}
\langle V^i(t),V^j(t)\rangle_m=c_{ij},\;i,j=1,2,\ldots,m,
\end{equation}
for each $t\in\mathbb{T}$. Besides, we have
\begin{equation}
\det V(t)=const,\;\;\;t\in\mathbb{T}.
\end{equation}
Indeed, the inclusion
\begin{equation}
\det V(t)\in \mathcal{P}
\end{equation}
is obvious (see (49) and (50)). The relation (52) implies that
(see (18))
$$
f^+(t)\,\det V(t)=\det F(t)\,\det V(t)\in \mathcal{P}^+.
$$
Thus, it follows from (21) and (55) that
 \begin{equation}
\det V(t)\in \mathcal{P}^+.
\end{equation}
Next we have (see (98) below)
\begin{equation}
\big(F^{-1}\big)^*=\begin{pmatrix}1&0&\cdots&0&-\overline{\zeta_{1}}/\overline{f^+}\\
          0&1&\cdots&0&-\overline{\zeta_{2}}/\overline{f^+}\\
           0&0&\cdots&0&-\overline{\zeta_{3}}/\overline{f^+}\\
           \cdot&\cdot&\cdots&\cdot&\cdot\\
           0&0&\cdots&1&-\overline{\zeta_{m-1}}/\overline{f^+}\\
           0&0&\cdots&0&1/\overline{f^+}
           \end{pmatrix}
\end{equation}
and
\begin{equation}
\det \big(F^{-1}\big)^*(t)=\big(\overline{f^+(t)}\big)^{-1}.
\end{equation}
Since the column vectors (51) are solutions of the system (28), we
have

\begin{equation}
\phi^+_{ij}(t)=\zeta_i(t)v_{mj}^+(t)-f^+(t)\overline{v_{ij}^+(t)}\in
\mathcal{P}^+,\;\;1\leq j\leq m,\,1\leq i< m.
\end{equation}
Direct computations give (see (57), (49), and (59))
$$
\big(F^{-1}\big)^*(t)V(t)=\big(\overline{f^+(t)}\big)^{-1}
\begin{pmatrix}
           -\overline{\phi^+_{11}(t)}&-\overline{\phi^+_{12}(t)}&\cdots&-\overline{\phi^+_{1m}(t)}\\[3mm]
                 -\overline{\phi^+_{21}(t)}&-\overline{\phi^+_{22}(t)}&\cdots&-\overline{\phi^+_{2m}(t)}\\[3mm]
           \vdots&\vdots&\vdots&\vdots\\[3mm]
           -\overline{\phi^+_{m-1,1}(t)}&-\overline{\phi^+_{m-1,2}(t)}&\cdots&-\overline{\phi^+_{m-1,m}(t)}\\[3mm]
           \overline{v^+_{m1}(t)}&\overline{v^+_{m2}(t)}&\cdots&\overline{v^+_{mm}(t)}\\
           \end{pmatrix}.
$$
Thus, there exists a matrix function $\Phi^+(t)\in \mathcal{P}^+$
(hence
\begin{equation}
\det\Phi^+(z)\in \mathcal{P}^+\;)
\end{equation}
such that
$$
\big(F^{-1}\big)^*(t)V(t)=\big(\overline{f^+(t)}\big)^{-1}\cdot\overline{\Phi^+(t)}.
$$
Consequently (see (58)),
$$
\big(\overline{f^+(t)}\big)^{-1}\det
V(t)=\big(\overline{f^+(t)}\big)^{-m}\det \overline{\Phi^+(t)}
$$
so that
$$
\big({f^+(t)}\big)^{m-1}\det\overline {V(t)}=\det {\Phi^+(t)}.
$$
Thus, it follows from (21), (55), and (60) that
\begin{equation}
\det \overline{V(t)}\in \mathcal{P}^+\,.
\end{equation}
The relations (56) and (61) imply $\det V(t)\in \mathcal{P}^+\cap
\mathcal{P}^-$ yielding (54).

The matrix function $V(t)$ is not yet unitary, but it can be
easily made  such by multiplying from the right by a constant
matrix. Namely, the matrix $C=(c_{ij})_{i,j=1,2,\ldots,m}$ defined
by (53),
\begin{equation}
C=\big(V^*(t)V(t)\big)^T,\;t\in\mathbb{T},
\end{equation}
is nonsingular. Indeed, if $C$ were singular and ${\bf 0}\not={\bf
w}=(w_1,w_2,\ldots,w_m)\in\mathbb{C}^m$ were such that ${\bf
w}C=0$, then
$$
\big\|\sum_{j=1}^m w_jV^j(t)\big\|^2_{\mathbb{C}^m}={\bf w}C{\bf
w}^*=0
$$
for each $t\in\mathbb{T}$, i.e. the vector functions $V^1(t),
V^2(t), \ldots,V^m(t)$ would be linearly dependent. But this is
impossible since the linear functional ${\bf L}:L^+_\infty\times
L^+_\infty\times\ldots\times L^+_\infty \to\mathbb{C}^m$ which
maps $(x^+_1(t),x^+_2(t), \ldots,x^+_m(t))$ into the $0$th Fourier
coefficients of the functions standing on the left-hand side of
the system (28), i.e. into
$\big(c_0\{\zeta_1(t)x^+_m(t)-f^+(t)\overline{x^+_1(t)}\}$,
$\ldots$, $c_0
\{\zeta_{m-1}(t)x^+_m(t)-f^+(t)\overline{x^+_{m-1}(t)}\}$,
$c_0\{\zeta_1(t)x^+_1(t)+\zeta_2(t)x^+_2(t)+\ldots+f^+(t)\overline{x^+_m(t)}\}\big)
$, transforms $m$ vector functions $V^1(t)$, $V^2(t)$, $\ldots$,
$V^m(t)$  into linearly independent standard bases of
$\mathbb{C}^m$, namely, ${\bf L}\big(V^j(t)\big)
=(\delta_{j1},\delta_{j2},\ldots,\delta_{jm})$, $j=1,2,\ldots,m$,
because of (38). Consequently, $V(1)$ is also nonsingular  since
 $ {C}^T=V^*(1)V(1)$ (see (62)).
 Let
\begin{equation}
U(t)=V(t)\big(V(1)\big)^{-1}.
\end{equation}
Then $U(t)$ is unitary since (see (63), (62))
$$
U^*(t)U(t)=\big((V(1))^{-1}\big)^*V^*(t)V(t)\big(V(1)\big)^{-1}=
\big((V(1))^{-1}\big)^*V^*(1)V(1)\big(V(1)\big)^{-1}=I_m.
$$
Since the matrix $\big(V(1)\big)^{-1}$ is constant,
$U(t)\in\mathcal{U}_m(\mathbb{T})$ has the same structure (15),
(23) as $V(t)$, and
  \begin{equation}
F(t) U(t)\in \mathcal{P}^+
\end{equation}
 holds as well (see (52) and (63)). Moreover,
$\det U(t)=const$, $t\in \mathbb{T}$ (see (54) and (63)), which
implies that $ \det U(t)=1 $ as we have $U(1)=I_m$ (see (63)).
Consequently,
  \begin{equation}
U(t)\in \mathcal{S} \mathcal{U}_m(\mathbb{T}).
\end{equation}

Let now
\begin{equation}
U_{F}(t)=U(t)\cdot\big(FU(0)\big)^{-1}\sqrt{FU(0)(FU(0))^*}.
\end{equation}
The multiplier of $U(t)$ in (66) is a (constant) unitary matrix,
so that $U_{F}(t)\in  \mathcal{U}_m(\mathbb{T})$, it has the
structure (15), (23) (since $U(t)$ has this structure), the
inclusion (24) holds (see (64), (66)), and (25) is valid too (see
(66)). The relation $ \det U_{F}(t)=1$ holds since  $\det
U_{F}(t)=c$ where $|c|=1$ (see (65), (66)), while $c>0$ since we
know that $0<\det \big(FU_{F}\big)(0)=c\cdot\det F(0)=cf^+(0)$
\big(see (25), Corollary 1, and (18)\big) and $f^+(0)>0$ (see
(21)). Consequently, the matrix function $U_{F}(t)\in \mathcal{S}
\mathcal{U}_m(\mathbb{T})$ defined by (66) satisfies the
requirements of Theorem 2 ($\mathbf{a}$) and it has been
constructed explicitly. The proof of the part ($\mathbf{a}$)  is
finished.

\begin{remark}
Note that, as in the case of  $U(t)$, the modified column vectors
of $U_F(t)$ are solutions of the system $(28)$ since this property
of a matrix function is preserved when we multiply it by a
constant matrix from the right.
\end{remark}

\section{Description of the Method}
A brief outline of the method is the following: $S(t)$ is
approximated by $M_N(t)M_N^*(t)$, where $M_N(t)$ is a lower
triangular matrix function with analytic entries on the diagonal
and whose entries below the diagonal have only finite number of
nonzero Fourier coefficients with negative indices, the last
product is represented as $M_N^+(t)(M_N^+)^*(t)$, where an
analytic matrix function $M_N^+(t)$ is  constructed explicitly,
and its convergence to $S^+(t)$ is proved.

Given a matrix spectral density (1), first  the lower-upper
triangular factorization of $S(t)$ is performed,
\begin{equation}
S(t)=M(t)(M(t))^*,
\end{equation}
where
\begin{equation}
M(t)=\begin{pmatrix}f^+_1(t)&0&\cdots&0&0\\
        \xi_{21}(t)&f^+_2(t)&\cdots&0&0\\
        \vdots&\vdots&\vdots&\vdots&\vdots\\
        \xi_{r-1,1}(t)&\xi_{r-1,2}(t)&\cdots&f^+_{r-1}(t)&0\\
        \xi_{r1}(t)&\xi_{r2}(t)&\cdots&\xi_{r,r-1}(t)&f^+_r(t)
        \end{pmatrix}.
\end{equation}
The functions $f_m^+(t)$, $m=1,2,\ldots,r$, on the diagonal are
taken the canonical spectral factors of the positive functions
$\det S_m(t)/\det S_{m-1}(t)$, where $S_0(t)=1$ and
$S_m(t)=\big(S(t)\big)_{m\times m}$, the upper-left $m\times m$
submatrix of $S(t)$. Namely,
\begin{equation}
 f_m^+(z)=\frac{(\det S_m)^+(z)}{(\det S_{m-1})^+(z)}\,,
\end{equation}
where (see (4))
$$
 (\det S_m)^+(z)=\exp\left(\frac 1{4\pi}
\int\nolimits_0^{2\pi}\frac{e^{i\theta}+z}{e^{i\theta}-z}\log \det
S_m(e^{i\theta})\,d\theta\right).
$$
We have $\log \det S_m(t)\in L_1(\mathbb{T})$, $m=0,1,\ldots,r$,
by virtue of (2) (see, e.g., [4; Sect. 5]), so that the functions
$(\det S_m)^+(z)$, and consequently $f_m^+(z)$, $m=1,2,\ldots,r$,
are well defined in (69). The entries $\xi_{ij}$, $2\leq i\leq r$,
$1\leq j< i$, can be found in a standard algebraic way from the
relation (67).

Note that (67) implies  $|f_1^+|^2=s_{11}\in L_1$ and
$\sum_{j=1}^{i-1}|\xi_{ij}|^2+|f_i^+|^2=s_{ii}\in L_1$,
$i=2,3,\ldots, r$. Thus $ M(t)\in L_2(\mathbb{T})$ (and hence $
M(t)U(t)\in L_2(\mathbb{T})$ for any $U(t)\in
\mathcal{U}_r(\mathbb{T})$). Furthermore, $ f^+_m\in
\mathcal{O}_2^O$, $m=1,2,\ldots,r$, which implies that
\begin{equation}
\det M(t)=f^+_1(t)f^+_2(t)\ldots f^+_r(t)\in
\mathcal{O}^O_{2/r}\,.
\end{equation}

We search for ${U}(t)\in \mathcal{S}\mathcal{U}_r(\mathbb{T})$
such that $M(t)U(t)$ is a spectral factor of $S(t)$ and continue
the description of our method in terms of Propositions 1 and 2,
which can be easily proved using Theorems 1 and 2, respectively.

\begin{proposition}
A spectral factor of $S(t)$ can be represented as
\begin{equation}
S^+(t)=M(t)\mathbf{U}_2(t)\mathbf{U}_3(t)\ldots \mathbf{U}_r(t),
\end{equation}
where $\mathbf{U}_m(t)\in \mathcal{S} \mathcal{U}_r(\mathbb{T})$
 has the block matrix form
\begin{equation}
\mathbf{U}_m(t)=\begin{pmatrix}U_{F_m}(t)&0\\0&I_{r-m}\end{pmatrix},\;
m=2,3,\ldots r-1,\;\mathbf{U}_r(t)=U_{F_r}(t),
\end{equation}
$F_m$ in $ (72)$ is the matrix function of the form $(13)$ whose
last row coincides with the last row of
$\big(M_{m-1}(t)\big)_{m\times m}$,
\begin{equation}
M_1(t):=M(t),\;\;M_m(t):=M(t)\mathbf{U}_2(t)\mathbf{U}_3(t)\ldots
\mathbf{U}_m(t)=M_{m-1}(t) \mathbf{U}_m(t),
\end{equation}
and $U_{F_m}(t)\in \mathcal{S} \mathcal{U}_m(\mathbb{T})$ is the
corresponding matrix function  determined according to Theorem
$1$, $m=2,3,\ldots r$.
\end{proposition}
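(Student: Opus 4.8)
The plan is to proceed by induction on $m$, peeling off one unitary factor $\mathbf{U}_m(t)$ at a time so that after $m-1$ such multiplications the first $m$ columns of $M_m(t) = M(t)\mathbf{U}_2(t)\cdots\mathbf{U}_m(t)$ have been turned into an analytic ($L_2^+$) block with outer determinant. First I would record the base case: $M_1(t) := M(t)$ is lower triangular with $f_1^+\in\mathcal{O}_2^0$ in the $(1,1)$ position, so its first column is already analytic and $\det\bigl(M_1(t)\bigr)_{1\times 1}=f_1^+\in\mathcal{O}_2$; nothing needs to be done at $m=1$. For the inductive step, suppose $M_{m-1}(t)$ has been constructed so that $\bigl(M_{m-1}(t)\bigr)_{m-1\times m-1}$ is analytic with outer determinant and the entries strictly above the first $m-1$ diagonal positions vanish in the relevant block-triangular sense. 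Because of the block form $\mathbf{U}_j=\operatorname{diag}(U_{F_j},I_{r-j})$ for $j\le m-1$, the submatrix $\bigl(M_{m-1}(t)\bigr)_{m\times m}$ differs from $\bigl(M(t)\bigr)_{m\times m}$ only through a right multiplication by a unitary matrix acting on the first $m$ coordinates, so its last row is exactly the row $F_m$ referred to in the statement, and it has the shape $(13)$ with $f^+(t)=f_m^+(t)\in\mathcal{O}_2^0$ (from $(71)$) and $\zeta_j(t)\in L_2$ (from the $L_2$ bound on $M(t)$ recorded after $(71)$). Here I must be slightly careful: $F_m$ as defined is literally the $m\times m$ matrix $(13)$ whose last row is the last row of $\bigl(M_{m-1}\bigr)_{m\times m}$ and whose first $m-1$ rows are those of $I_m$, not the genuine submatrix $\bigl(M_{m-1}\bigr)_{m\times m}$; the point is that the two matrices have the same last row and the action of $U_{F_m}$ only involves that last row together with the identity block above it, so applying $U_{F_m}$ to $\bigl(M_{m-1}\bigr)_{m\times m}$ has the same analytizing effect it has on $F_m$.

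Next I would invoke Theorem 1 applied to $F_m$: it produces $U_{F_m}(t)\in\mathcal{SU}_m(\mathbb{T})$ of the form $(15)$--$(17)$ with $F_m U_{F_m}\in\mathcal{O}_2^0\subset L_2^+$. Setting $\mathbf{U}_m(t)=\operatorname{diag}(U_{F_m}(t),I_{r-m})$ (or $\mathbf{U}_r=U_{F_r}$ when $m=r$), this lies in $\mathcal{SU}_r(\mathbb{T})$, and $M_m(t)=M_{m-1}(t)\mathbf{U}_m(t)$. I would then check that the first $m$ columns of $M_m(t)$ are in $L_2^+$: the first $m-1$ were already analytic and, since $\mathbf{U}_m$ acts as the identity on the last $r-m$ coordinates while mixing the first $m$, one must verify that the analyticity of the first $m-1$ columns is not destroyed — this follows from the block-triangular structure of $M_{m-1}$ (its entries in rows $1,\dots,m-1$ beyond column $m-1$ are zero in the way inherited from $(69)$ and from the previous $U_{F_j}$'s having the form $(15)$) so that the relevant product only sees the identity block, together with $u^+_{ij}\in L_\infty^+$ so that products of analytic functions stay analytic. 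The $m$th column is analytic precisely because the last entry of the $m$th row of $F_m U_{F_m}$ equals $(F_m U_{F_m})_{mm}=$ the canonical factor, and the rows $1,\dots,m-1$ of $M_{m-1}U_{F_m}$ restricted to the first $m$ columns are combinations with $L_\infty^+$ coefficients of already-analytic functions.

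Finally I would identify $M_r(t)=M(t)\mathbf{U}_2(t)\cdots\mathbf{U}_r(t)$ as a spectral factor of $S(t)$ by appealing to Lemma 2. Indeed $M_r(t)M_r^*(t)=M(t)M^*(t)=S(t)$ because each $\mathbf{U}_j$ is unitary; $M_r(t)\in L_2^+(\mathbb{T})$ by the column-by-column argument above (at $m=r$ all $r$ columns are analytic); and $\det M(t)\in\mathcal{O}_{2/r}$ by $(71)$, which is condition $(10)$ with $m=r$. Since $\mathbf{U}:=\mathbf{U}_2\cdots\mathbf{U}_r\in\mathcal{SU}_r(\mathbb{T})$ and $M(t)\mathbf{U}(t)=M_r(t)\in L_2^+$, Lemma 2 gives that $M(t)\mathbf{U}(t)$ is a spectral factor of $M(t)M^*(t)=S(t)$, which is $(72)$. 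I expect the main obstacle to be bookkeeping the block-triangular structure through the inductive step — making precise that multiplying $M_{m-1}$ by $\operatorname{diag}(U_{F_m},I_{r-m})$ preserves the analyticity already achieved in the first $m-1$ columns and matches the row $F_m$ to the honest submatrix $\bigl(M_{m-1}\bigr)_{m\times m}$ — rather than anything analytic, since all the hard analysis has been front-loaded into Theorems 1 and 2 and Lemma 2.
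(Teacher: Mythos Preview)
Your proposal is correct and follows essentially the same approach as the paper: induction on $m$ to show $\bigl(M_m\bigr)_{m\times m}\in L_2^+(\mathbb{T})$, Theorem~1 at each step, and Lemma~2 to conclude. The paper dispatches the bookkeeping you anticipate with the single identity $\bigl(M_{m-1}\bigr)_{m\times m}=\operatorname{diag}\bigl((M_{m-1})_{(m-1)\times(m-1)},\,1\bigr)\cdot F_m$, so that $\bigl(M_m\bigr)_{m\times m}$ is this $L_2^+$ block-diagonal factor times $F_mU_{F_m}$, whose first $m-1$ rows coincide with those of $U_{F_m}$ and hence lie in $L_\infty^+$; this one line replaces your column-by-column check.
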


\begin{proof}
Obviously, the product of two matrix functions from $ \mathcal{S}
\mathcal{U}_r(\mathbb{T})$ is in the same class. Thus, by virtue
of Lemma 2 (see (67), (70)), it suffices to show that
\begin{equation}
S^+(t)=M_r(t)=M(t)\mathbf{U}_2(t)\mathbf{U}_3(t)\ldots
\mathbf{U}_r(t)\in L_2^+(\mathbb{T}).
\end{equation}

It follows from the structures of the matrices in (72) and (73)
that
\begin{equation}
\big(M_m(t)\big)_{m\times m}=\big(M_{m-1}(t)\big)_{m\times
m}U_{F_m}(t),
\end{equation}
while the last $r-m$ columns of $M(t)$ remains unaltered in
$M_m(t)$.

We show by induction that
\begin{equation}
\big(M_m(t)\big)_{m\times m}\in L_2^+(\mathbb{T}),\;\;
m=1,2,\ldots,r.
\end{equation}
 Indeed, clearly (76) is correct for $m=1$. Assume
now that (76) holds when $m$ is replaced by $m-1$ in it, i.e.
\begin{equation}
\big(M_{m-1}(t)\big)_{(m-1)\times (m-1)}\in L_2^+(\mathbb{T})\,.
\end{equation}
Then $\big(M_{m-1}(t)\big)_{m\times m}\in L_2(\mathbb{T})$ has the
form
\begin{equation}
\big(M_{m-1}(t)\big)_{m\times m}=\begin{pmatrix}\mu^{+}_{11}(t)&\mu^{+}_{12}(t)&\cdots&\mu^{+}_{1,m-1}(t)&0\\
                 \mu^{+}_{21}(t)&\mu^{+}_{22}(t)&\cdots&\mu^{+}_{2,m-1}(t)&0\\
           \vdots&\vdots&\vdots&\vdots&\vdots\\
           \mu^{+}_{m-1,1}(t)&\mu^{+}_{m-1,2}(t)&\cdots&\mu^{+}_{m-1,m-1}(t)&0\\
           \zeta_{1}(t)&\zeta_{2}(t)&\cdots&\zeta_{m-1}(t)&f_m^{+}(t)
           \end{pmatrix},
\end{equation}
where $\mu^{+}_{ij}(t)\in L_2^{+}({\mathbb T})$,
$i,j=1,2,\ldots,m-1$, by  (77), $\zeta_j(t)\in L_2({\mathbb T})$,
$j=1,2,\ldots,m-1$,  $f_m^{+}\in \mathcal{O}_2^0$ is defined by
(69) (see (68)), and
$\big(\zeta_{1}(t),\zeta_{2}(t),\cdots\zeta_{m-1}(t),f_m^{+}(t)\big)$
is the last row of $F_m(t)$ according to its definition in
Proposition 1 (note that Theorem 1 can be applied to the matrix
function $F_m(t)$). The direct computation shows that (78) is
equal to (see (13))
$$
\begin{pmatrix}\mu^{+}_{11}(t)&\cdots&\mu^{+}_{1,m-1}(t)&0\\
                 \mu^{+}_{21}(t)&\cdots&\mu^{+}_{2,m-1}(t)&0\\
           \vdots&\vdots&\vdots&\vdots\\
           \mu^{+}_{m-1,1}(t)&\cdots&\mu^{+}_{m-1,m-1}(t)&0\\
          0&\cdots&0&1
           \end{pmatrix}F_m(t)=\begin{pmatrix}\big(M_{m-1}(t)\big)_{(m-1)\times (m-1)}&0\\0&1
           \end{pmatrix}F_m(t)
$$
and we get that (see (75))
\begin{equation}
 \big(M_m(t)\big)_{m\times m}=\big(M_{m-1}(t)\big)_{m\times
 m}U_{F_m}(t)=\begin{pmatrix}\big(M_{m-1}(t)\big)_{(m-1)\times (m-1)}&0\\0&1
           \end{pmatrix}F_m(t)U_{F_m}(t)
\end{equation}
belongs to  $L_2^+(\mathbb{T})$ (see (77) and (17)). Thus
 (76) is valid and taking $m=r$ in (76) we get (74).

Proposition 1 is proved.
\end{proof}

\begin{remark} One can see from the above proof that $\big(M_m(t)\big)_{m\times m}$ is a
spectral factor of $S_m(t)$,
$$
 S_m^+(t)=\big(M_m(t)\big)_{m\times m},\;\;\;m=1,2,\ldots,r.
$$
Thus  the representation $ (71)$ realizes the step-by-step
factorization of the upper-left submatrices of $S(t)$.
\end{remark}
\begin{remark}
There is an alternative way of representation $ (71)$ which avoids
preliminary computation of entries $\xi_{ij}(t)$ in $ (68)$.
Namely, we can compute only $f_m^+(t)$, $m=1,2,\ldots,r$, in
 $ (68)$ according to $ (69)$ $($note that  $\big(M_1(t)\big)_{1\times
1}=f_1^+(t)$ $)$ and determine $\big(M_m(t)\big)_{m\times m}$
recurrently from $\big(M_{m-1}(t)\big)_{(m-1)\times (m-1)}$ by the
formula $ (79)$. The entries
$\zeta_{1}(t),\zeta_{2}(t),\cdots\zeta_{m-1}(t)$ of $F_m(t)$ and
of $ (78)$ can be determined from the equation
$$
\big(M_{m-1}(t)\big)_{(m-1)\times (m-1)}
(\zeta_1,\zeta_2\ldots,\zeta_{m-1})^*=(s_{1m},s_{2m},\ldots,s_{m-1,m})^T
$$
which follows from  $\big(M_{m-1}(t)\big)_{m\times
m}\big(M_{m-1}(t)\big)^*_{m\times m}=S_m(t)$. In this way, we can
obtain each $\big(M_m(t)\big)_{m\times m}$, $m=1,2,\ldots,r$, and
respectively $S^+(t)=\big(M_r(t)\big)_{r\times r}=M_r(t)$.
\end{remark}

Relying on Proposition 1, we recurrently approximate $S^+(t)$ as
follows. Let $N_2$,$N_3$,$\ldots$, $N_r$  be large positive
integers, and let
\begin{equation}
\hat{S}^+(t)=\hat{S}^+[N_2,N_3,\ldots,N_r]:=
{M}(t)\hat{\mathbf{U}}_2(t)\hat{\mathbf{U}}_3(t)\ldots
\hat{\mathbf{U}}_r(t),
\end{equation}
where $\hat{\mathbf{U}}_m(t)\in \mathcal{S}
\mathcal{U}_r(\mathbb{T})$
 has the block matrix form
\begin{equation}
\hat{\mathbf{U}}_m(t)=\begin{pmatrix}U_{\hat{F}_m^{(N_m)}}(t)&0\\[1mm]0&I_{r-m}\end{pmatrix},
\; m=2,3,\ldots
r-1,\;\hat{\mathbf{U}}_r(t)=U_{\hat{F}^{(N_r)}_r}(t),
\end{equation}
$\hat{F}_m\in L_2(\mathbb{T})$  is the matrix function of the form
$ (13)$ whose last row coincides with the last row of
$\big(\hat{M}_{m-1}(t)\big)_{m\times m}$,
\begin{equation}
\hat{M}_1(t):=M(t),\;\;\hat{M}_m(t):=M(t)\hat{\mathbf{U}}_2(t)\hat{\mathbf{U}}_3(t)\ldots
\hat{\mathbf{U}}_m(t)=\hat{M}_{m-1}(t)\hat{\mathbf{U}}_m(t),
\end{equation}
$\hat{F}^{(N_m)}_m(t)$ in (81) is $P_{N_m}\hat{F}_m(t)$ (see the
definition of the projection operator $P_N$ in Section 3), and
$U_{\hat{F}^{(N_m)}_m}(t)\in \mathcal{S}
\mathcal{U}_m(\mathbb{T})$ is the corresponding matrix function
determined according to Theorem $2$ {\bf (a)}, $m=2,3,\ldots r$.
We emphasize that as $S(t)$ is given and the positive integers
$N_2,N_3,\ldots,N_r$ are fixed, each
$\hat{M}_m(t)=\hat{M}_m[N_2,N_3,\ldots,N_m]$, $m=2,3,\ldots,r$,
can be explicitly constructed according to the proof of Theorem 2
($\mathbf{a}$). The following proposition shows that
\begin{equation}
\hat{S}^+(t)=\hat{S}^+[N_2,N_3,\ldots,N_r]=\hat{M}_r(t)
\end{equation}
 (see (80) and (82)) approximates a spectral factor of $S(t)$.

\begin{proposition}
$ \|{S}^+(t)-\hat{S}^+(t)\|_{L_2}\to 0\;\;\text{ as }
N_2,N_3,\ldots,,N_{r}\to\infty. $
\end{proposition}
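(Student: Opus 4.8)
The plan is to prove Proposition 2 by induction on $m$, showing more precisely that each upper-left submatrix converges: $\|(\hat M_m(t))_{m\times m} - (M_m(t))_{m\times m}\|_{L_2}\to 0$ as $N_2,\ldots,N_m\to\infty$, for $m=1,2,\ldots,r$, and simultaneously that the relevant ``tail'' matrix functions $\hat F_m(t)$ converge in $L_2$ to $F_m(t)$ and the unitary multipliers converge in measure, $U_{\hat F_m^{(N_m)}}(t)\rightrightarrows U_{F_m}(t)$. Taking $m=r$ and recalling $S^+(t)=M_r(t)=(M_r(t))_{r\times r}$ (Remark 3) and $\hat S^+(t)=\hat M_r(t)$ then yields the claim.

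For the base case $m=1$ both submatrices equal $f_1^+(t)$, so there is nothing to prove. For the inductive step, assume $\|(\hat M_{m-1}(t))_{(m-1)\times(m-1)} - (M_{m-1}(t))_{(m-1)\times(m-1)}\|_{L_2}\to 0$. By formula (79) and its hatted analogue, $(\hat M_m)_{m\times m} = \operatorname{diag}\big((\hat M_{m-1})_{(m-1)\times(m-1)},1\big)\,\hat F_m(t)\,U_{\hat F_m^{(N_m)}}(t)$ and likewise without hats. First I would observe that the last row $(\zeta_1,\ldots,\zeta_{m-1},f_m^+)$ of $F_m$ equals the last row of $(M_{m-1})_{m\times m}$, and the hatted last row is the last row of $(\hat M_{m-1})_{m\times m}$ truncated by $P_{N_m}$; so the inductive hypothesis together with the basic convergence (19)--(21) of Fourier truncation gives $\|\hat F_m^{(N_m)}(t) - F_m(t)\|_{L_2}\to 0$ as $N_2,\ldots,N_m\to\infty$. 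Now Theorem 2 $\mathbf{(b)}$ applies directly to the sequence $\hat F_m^{(N_m)}$: it yields $\|\hat F_m^{(N_m)}(t)\,U_{\hat F_m^{(N_m)}}(t) - F_{m,c}^+(t)\|_{H_2}\to 0$ and $U_{\hat F_m^{(N_m)}}(t)\rightrightarrows U_{F_m}(t)$, where $F_{m,c}^+ = F_m U_{F_m}$ by Theorem 1.

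It remains to assemble these pieces into convergence of the full product $\operatorname{diag}\big((\hat M_{m-1})_{(m-1)\times(m-1)},1\big)\hat F_m^{(N_m)}U_{\hat F_m^{(N_m)}}$. The entries of $\hat F_m^{(N_m)}U_{\hat F_m^{(N_m)}}$ converge in $L_2$ (to those of $F_m U_{F_m}$), while the entries of $\operatorname{diag}\big((\hat M_{m-1})_{(m-1)\times(m-1)},1\big)$ converge in $L_2$ — but a product of two merely-$L_2$-convergent sequences need not converge in $L_2$. This is the main obstacle, and the way around it is the uniform boundedness built into the construction: every entry $u_{ij}^+$ of a unitary matrix function is bounded by $1$ a.e., so $U_{\hat F_m^{(N_m)}}$ is uniformly bounded, and combined with $U_{\hat F_m^{(N_m)}}\rightrightarrows U_{F_m}$ and $\hat F_m^{(N_m)}\to F_m$ in $L_2$, Statement 1 (the Lebesgue-theory lemma stated at the end of Section 3) gives $\hat F_m^{(N_m)}U_{\hat F_m^{(N_m)}}\to F_m U_{F_m}$ in $L_2$. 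For the outer factor, the first $m-1$ diagonal blocks $(\hat M_{m-1})_{(m-1)\times(m-1)}$ are themselves analytic ($L_2^+$) and converge in $H_2$ by the inductive hypothesis; multiplying an $H_2$-convergent sequence by an $L_\infty$-bounded, in-measure convergent sequence again lands in $L_2$ by Statement 1 applied entrywise, after noting that $H_2$-convergence of $(\hat M_{m-1})_{(m-1)\times(m-1)}$ implies $L_2$-convergence of its boundary values. Hence $(\hat M_m)_{m\times m}\to (M_m)_{m\times m}$ in $L_2$, completing the induction; the $m=r$ instance is exactly Proposition 2.

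One point I would be careful to record: in invoking Statement 1 one needs $u_n\le 1$, which here is the entrywise bound $|u_{ij}^+|\le 1$ on unitary matrices noted in Section 3; the in-measure convergence of the columns of $U_{\hat F_m^{(N_m)}}$ is exactly (27) from Theorem 2 $\mathbf{(b)}$, and the in-$L_2$ convergence of the other factor is the inductive hypothesis (for the analytic block) and (20) together with the inductive hypothesis (for the last row, hence for $\hat F_m^{(N_m)}$). No numerical estimates are needed — the whole proof is a bookkeeping of which mode of convergence (in $H_2$, in $L_2$, in measure) is available for each factor, plus one application of Statement 1 at each inductive level to turn ``$L_2\times$(bounded, in measure)'' into ``$L_2$''.
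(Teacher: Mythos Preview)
Your approach is the same induction--plus--Statement~1 strategy as the paper, but the decision to run the induction on the $m\times m$ upper-left blocks (via the three-factor decomposition from~(79)) rather than on the full $r\times r$ matrices $M_m$ introduces a real gap that the assembly paragraph does not close.

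Two concrete problems. First, your inductive hypothesis gives only $\bigl(\hat M_{m-1}\bigr)_{(m-1)\times(m-1)}\to \bigl(M_{m-1}\bigr)_{(m-1)\times(m-1)}$ in $L_2$, but the last row of $\hat F_m$ is the $m$th row of $\hat M_{m-1}$, which lies \emph{outside} that $(m-1)\times(m-1)$ block. So the sentence ``the inductive hypothesis together with (19)--(21) gives $\|\hat F_m^{(N_m)}-F_m\|_{L_2}\to 0$'' is unjustified as written. Second, in the assembly step you propose to apply Statement~1 to the product $\operatorname{diag}\bigl((\hat M_{m-1})_{(m-1)\times(m-1)},1\bigr)\cdot\bigl[\hat F_m\,U_{\hat F_m^{(N_m)}}\bigr]$, calling the second factor ``$L_\infty$-bounded, in-measure convergent''. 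It is not: its last row is (last row of $\hat F_m$)$\cdot U_{\hat F_m^{(N_m)}}$, with entries only in $L_2$. Statement~1 requires one of the two factors to be uniformly bounded, and neither factor in this grouping is. (Also, the claim that $(\hat M_{m-1})_{(m-1)\times(m-1)}$ is analytic is false in general: its last row carries the untruncated $\hat\zeta_j$'s multiplied by $U_{\hat F_{m-1}^{(N_{m-1})}}$, which need not lie in $L_2^+$.)

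The clean repair is exactly what the paper does: carry the induction on the full matrices, proving $\|\hat M_{m-1}-M_{m-1}\|_{L_2}\to 0$ for all entries (not just the upper-left block). Then the definition of $\hat F_m$ immediately gives $\hat F_m\to F_m$ in $L_2$, hence $\hat F_m^{(N_m)}\to F_m$ in $L_2$, hence by Theorem~2\,(b) $\hat{\mathbf U}_m\rightrightarrows \mathbf U_m$; and now one writes the \emph{two}-factor formula $\hat M_m=\hat M_{m-1}\hat{\mathbf U}_m$ and applies Statement~1 once, with the unitary $\hat{\mathbf U}_m$ supplying the bounded-in-measure factor. No three-factor product, no separate handling of the last row, no appeal to analyticity.
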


\begin{proof}
We  prove by induction that
\begin{equation}
\|M_{m}(t)-\hat{M}_{m}(t)\|_{L_2}\to 0\text{ as }
N_2,N_3,\ldots,N_{m}\to\infty,\;m=2,3,\ldots,r.
\end{equation}
Indeed, assume that
\begin{equation}
\|M_{m-1}(t)-\hat{M}_{m-1}(t)\|_{L_2}\to 0\text{ as }
N_2,N_3,\ldots,N_{m-1}\to\infty
\end{equation}
holds (note that $M_{1}(t)=\hat{M}_{1}(t)$). Then, by virtue of
the definitions of ${F}_m(t)$ and $\hat{F}_m(t)$,
\begin{equation} \|{F}_m(t)-\hat{F}_m(t)\|_{L_2}\to 0\;\;\text{ as }
N_2,N_3,\ldots,N_{m-1}\to\infty.
\end{equation}
Obviously (see (20)),
\begin{equation}
\|\hat{F}_m(t)-\hat{F}_m^{(N)}(t)\|_{L_2}\to
0\;\;\text{as}\;\;N\to \infty.
\end{equation}
It follows from (86) and (87) that
$$
\|{F_m}(t)-\hat{F}_m^{(N_m)}(t)\|_{L_2}\to
0\;\;\text{as}\;\;N_2,N_3, \ldots,N_{m}\to\infty.
$$
Thus, by virtue of Theorem 2 $\mathbf{(b)}$,
$U_{\hat{F}_m^{(N_m)}}(t)\rightrightarrows U_{F_m}(t)$ and hence
(see (72) and (81)) $\hat{\mathbf{U}}_m(t)\rightrightarrows
\mathbf{U}_m(t)$ as $N_2,N_3$,$\ldots$,$N_{m}$ $\to$ $\infty$.
Consequently (see (73), (82), (85), and Statement 1 in Sect 3),
$$
 \|{M}_m(t)-\hat{M}_m(t)\|_{L_2}= \|{M}_{m-1}(t)
 {\mathbf{U}}_m(t)-\hat{M}_{m-1}(t)
 \hat{\mathbf{U}}_m(t)\|_{L_2}\to 0
$$
as $N_2,N_3,\ldots,N_m\to \infty$ and (84) holds.

If we substitute $m=r$ into (84), we get the proposition (see (74)
and (83)).
\end{proof}

\begin{remark}
The rate of convergence in Proposition 2 is estimated under minor
restrictions on $S(t)$, which is the subject of a forthcoming
paper.
\end{remark}
\begin{remark}
In actual computations of $\hat{S}^+(t)$ according to $ (80)$, we
cannot take $M(t)$ exactly since  it requires scalar spectral
factorizations. As it was mentioned above, our method does not
contain any improvement in approximate computation of $M(t)$. We
can assume that it can be constructed
$\hat{M}_1(t)=\hat{M}_1[N_1](t)$ in $ (82)$ such that
$\|{M}(t)-\hat{M}_1(t)\|_{L_2}\to 0$ as $N_1\to \infty$, and the
rest of the proof of Proposition $2$ goes through without any
change.
\end{remark}

If we wish to construct an approximation to the canonical spectral
factor ${S}_c^+$, then we take (see Lemma 1)
 $
\hat{S}_c^+(z)=\hat{S}^+(z)\big(\hat{S}^+(0)\big)^{-1}\sqrt{\hat{S}^+(0)(\hat{S}^+(0))^*}\,.
 $

\section{Numerical Simulations}

The computer code for the factorization of polynomial matrix
functions by our method was written in  MatLab in order to test
the algorithm numerically and compare it with other existing
software implementations available in the MatLab toolbox ``Polyx".
The results of our numerical simulations are presented in this
section.

Two different commands, ${\rm spf}(\cdot)$ and ${\rm
spf}(\cdot,syl)$ are available in Polyx to perform polynomial
matrix spectral factorization for a discrete time variable $z$.
(As it is explained in the software manual these factorizations
are based on the Newton-Raphson iteration and on the Sylvester's
method, respectively.) We have supplied the three programs with
the same data and compared their performances. The computer with
characteristics Intel(R) Core(TM) Quad CPU,   Q6600 \@2.40GHz,
2.40 GHz, RAM 2.00Gb was used for these simulations.

In the first place we took a test matrix  whose spectral
factorization was known beforehand,
$$
\begin{pmatrix}2z^{-1}+6+2z&7z^{-1}+22+11z\\
11z^{-1}+22+7z&38z^{-1}+84+38z\end{pmatrix}=\begin{pmatrix}2+z^{-1}&1\\
7+5z^{-1}&3+z^{-1}\end{pmatrix}
\begin{pmatrix}2+z&7+5z\\
1&3+z\end{pmatrix}
$$
(the matrix is very simple, but its determinant, $-z^{-2}+2-z^2$,
has two double zeros on the boundary, which usually causes
difficulties in many methods). So  the correct answer for the
(right) spectral factor (with the uniqueness restriction for the
coefficient matrix of the highest degree of $z$ to be upper
triangular with positive entries on the diagonal, as it is in
Polyx) is {\small
$$
\begin{pmatrix}2.000\ldots&7.000\ldots\\
1.000\ldots&3.000\ldots\end{pmatrix}+\begin{pmatrix}1.000\ldots&5.000\ldots\\
0&1.000\ldots\end{pmatrix}t\,.
$$}
The resulting coefficient matrices obtained by ${\rm spf}(\cdot)$
and ${\rm spf}(\cdot,syl)$ were the same {\small
$$
\begin{pmatrix}1.998657938438840&   6.995302784535943\\
   1.002013092341746 &  3.007856444481516\end{pmatrix}, \begin{pmatrix}1.000671030780579 &
      5.001537986446627\\ 0 &  1.001621242570816\end{pmatrix}
$$}and the time elapsed varied within 0.22-0.24 sec. Below we present
the results of computation by the program based on our algorithm
which shows the advantage of the proposed method. In the process
of the calculations three different pairs of tuning parameters
were used: $\epsilon$, the accuracy level of scalar spectral
factorizations of $S_1(t)$ and $\det S_2(t)$ in (69), and $N=N_2$,
a  positive integer in (83). Accuracy improvements are evident as
proved theoretically in Section 6:

\noindent $\epsilon=0.0001$; $N=20$; time elapsed: 0.04  sec.
{\small
$$
\begin{pmatrix}1.999540036680776 &   6.998506647352555\\
   1.000742648478974 &      3.002561886914702\end{pmatrix}, \begin{pmatrix}
   1.000213381614498&
   5.000347044583584\\ 0 &  1.000664442251186\end{pmatrix}
$$}
\noindent $\epsilon=0.000001$; $N=30$;    time elapsed: 0.14 sec
{\small
$$
\begin{pmatrix}1.999999670218186&   6.999998845763626\\
   1.000000494672526 & 3.000001951212981\end{pmatrix},
   \begin{pmatrix}  1.000000439718603
   &5.000000357258912
  \\ 0 &   1.000000164890947\end{pmatrix}
$$}
\noindent $\epsilon=0.00000001$; $N=40$;  time elapsed: 0.31 sec
{\small
$$
\begin{pmatrix}1.999999957974826&   6.999999852911864 \\
  1.000000063037774 &  3.000000246615687\end{pmatrix}, \begin{pmatrix} 1.000000021012597&
  5.000000047560575 \\ 0 &  1.000000051966992\end{pmatrix}
$$}

When data were selected at random and exact results were unknown,
the mean of absolute values of polynomial coefficients of the
error matrix $\big(\hat{S}^+\big)^*\hat{S}^+-S$ was taken in the
capacity of an  accuracy estimator (in general, the closeness of
$\big(\hat{S}^+\big)^*\hat{S}^+$ to $S$ does  not imply that
$\hat{S}^+$ is close to $S^+$, see [13], [1], but this is the case
for polynomial matrix functions). In the table below this mean is
denoted by $\varepsilon$. Calculation time values are shown, and
the matrix sizes are given; say $4\times 10$ indicates that a
$4\times 4$ test matrix was selected with (Laurent) polynomial
entries of degree 10 (with coefficients from -10 to 10). The
results of calculations by ${\rm spf}(\cdot)$ and ${\rm
spf}(\cdot,syl)$ were almost identical. In the case of our
algorithm, we varied the tuning parameters of the program ($N_2,
N_3,\ldots,N_r$ in (83)) so as to obtain a slightly higher
accuracy than by ${\rm spf}(\cdot)$ and ${\rm spf}(\cdot,syl)$,
while the advantage in time was noticeable.

{\fontsize{9}{9pt}\selectfont
\begin{center}
\begin{tabular}{|c|c|c|c|c|c|c|c|c|c|c|c|c|c|}
\hline
&&&&&&&&&&&\\[-2mm]
&matr&time&accur.&matr.&time&accur.&matr&time&accur.&matr.&time&accur.\\
&size&$sec$&$\varepsilon$&size&$sec$&$\varepsilon$&size&$sec$&$\varepsilon$&size&$sec$&$\varepsilon$\\[2mm]
\hline &&&&&&&&&&&\\[-2mm]
$spf(\cdot)$&4x10&0.67&$10^{-10}$&6x15&5.8&$10^{-6}$&10x20&218&$10^{-6}$&15x20&1949&$10^{-7}$
\\\hline &&&&&&&&&&&\\[-2mm]
$spf(\cdot,syl)$&--&0.56&$10^{-10}$&--&4.7&$10^{-6}$&--&214&$10^{-6}$&--&1952&$10^{-7}$\\
\hline &&&&&&&&&&&\\[-2mm]
New Alg.&--&0.46&$10^{-12}$&--&3.4&$10^{-7}$&--&65&$10^{-8}$&--&216&$10^{-8}$\\
\hline
\end{tabular}
\end{center}}

\smallskip

We express our gratitude to PhD student Vakhtang Rodonaia for
working out the software for testing our algorithm and collecting
the numerical data.

\section{Conclusion}

A new algorithm of matrix spectral factorization is developed,
which factorizes any matrix spectral density that admits spectral
factorization. The advantage of the algorithm is illustrated by
the examples of numerical simulations.

\section{Appendices}
{\bf A. Convergence properties. } In this appendix we continue the
proof of Theorem 2 started in Sect. 5 and prove the second part
$\mathbf{(b)}$, which deals with convergence properties of the
algorithm. This proof is similar to the one given in [12] for the
two-dimensional case.

Observe first that:

(i) {\em if $\{U_{F^{(N)}}(t)\}_{N\in\mathbb{N}_0}$,
$\mathbb{N}_0\subset\mathbb{N}$, is any convergent almost
everywhere subsequence of $U_{F^{(N)}}(t)$, i.e. if
$$
U_{F^{(N)}}(t)\to U(t) \;\;\text{a.e. as}\;\;\mathbb{N}_0\ni N\to
\infty,
$$
then}
\begin{equation}
F_c^+(t)=F(t)U(t).
\end{equation}
Indeed, passing to the limit in the relations (22), (24), $\det
\big(F^{(N)}U_{F^{(N)}}\big)(z)=f_{(N)}^+(z)$ (see Corollary 1 and
 (18)), and (25), we get
$$
U(t)\in \mathcal{S} \mathcal{U}_m(\mathbb{T}),\;  F(t)U(t)\in
L_2^+(\mathbb{T}), \; \big(FU\big)(z)=f^+(z)\in \mathcal{O}_2^0
\text{ (see (14)), and } FU(0)>0,
$$
which implies (88) (see Lemma 2).

Now it will be shown that

(ii) {\em from each subsequence
$\{U_{F^{(N)}}(t)\}_{N\in\mathbb{N}_1\subset\mathbb{N}}$ we can
extract an a.e. convergent subsequence
$\{U_{F^{(N)}}(t)\}_{N\in\mathbb{N}_0\subset\mathbb{N}_1}$}.

This will finish the proof of the relation (26) by virtue of the
uniqueness of the canonical spectral factor and  the property (i).

We say that a sequence of functions $f_n\in L_2$, $n=1,2,\ldots$
belongs to $\mathcal{K}$, $\{f_n\}_{n\in \mathbb{N}}\in
\mathcal{K}$, if one can extract a convergent in $L_2$ subsequence
from $f_n$. Recall that an operator $K:L_\infty^\pm\to L_2^\pm$ is
called compact if $\{K(h_n)\}_{n\in \mathbb{N}}\in \mathcal{K}$
for any bounded sequence $\{h_n\}_{n\in \mathbb{N}}$, $|h_n|<c$,
$n=1,2,\ldots$ (see [15; \S 4.6]).

To prove the property (ii), observe that Hankel's operators
$$
H_\zeta:L_\infty^+\to L_2^-,\; \zeta\in
L_2,\;\;\;\text{and}\;\;\;H_f^*:L_\infty^-\to L_2^+,\; f\in L_2,
$$
defined by
\begin{equation}
H_\zeta(u^+)=P^-(\zeta u^+),\;\;\;u^+\in L_\infty^+,
\end{equation}
and
\begin{equation}
H^*_f(u^-)=P^+(f u^-),\;\;\;u^-\in L_\infty^-,
\end{equation}
are compact operators as  limits of finite-dimensional operators
(see, e.g., [15; Th. 4.6.1]).

Fix arbitrary $j\leq m$, and let
$({u_1^{+(N)}},{u_2^{+(N)}},\ldots,u^{+(N)}_{m-1},\overline{{u_m^{+(N)}}})^T$
be the $j$th column of $U_{F^{(N)}}$. Since the modified columns
of $U_{F^{(N)}}(t)$ are solutions of the system (28) (see Remark 2
in Sect. 5), we have
\begin{equation}
\zeta_i^{(N)}{u^+_m}^{(N)}-f_{(N)}^+\overline{{u_i^{+(N)}}}\in
\mathcal{P}^+,\;\;1\leq i\leq m-1,
\end{equation}
and
\begin{equation}
\zeta_1^{(N)}{u_1^{+(N)}}+\zeta_2^{(N)}{u_2^{+(N)}}+
\ldots+\zeta_{m-1}^{(N)}u^{+(N)}_{m-1}+f_{(N)}^+\overline{{u_m^{+(N)}}}\in
\mathcal{P}^+.
\end{equation}
It follows from the compactness of  the operator (89) and (19)
that
\begin{equation}
\left\{P^-
\big(\zeta_i^{(N)}{u_i^{+(N)}}\big)\right\}_{N\in\mathbb{N}_1}=
\left\{P^- \big((\zeta_i^{(N)}-\zeta_i){u_i^{+(N)}}\big) +P^-\big
(\zeta_i{u_i^{+(N)}}\big)\right\}_{N\in\mathbb{N}_1}\in
\mathcal{K}
\end{equation}
 for each  $i=1,2,\ldots,m-1$, and thus
 $\{P^-(f_{(N)}^+\overline{{u^+_m}^{(N)}})\}_{N\in\mathbb{N}_1}\in
\mathcal{K}$, because of the relation (92). It follows from the
compactness of operator (90) and (19) that
\begin{equation}
\left\{P^+\big(f_{(N)}^+\overline{{u_m^{+(N)}}}\big)\right\}_{N\in\mathbb{N}_1}=
\left\{P^+\big((f_{(N)}^+-f^+)\overline{{u_m^{+(N)}}}\big)+
P^+\big(f^+\overline{{u_m^{+(N)}}}\big)\right\}_{N\in\mathbb{N}_1}\in
\mathcal{K}
\end{equation}
as well. Hence (see (93), (94))
\begin{equation}
\left\{f_{(N)}^+\overline{{u^+_m}^{(N)}}\right\}_{N\in\mathbb{N}_1}=\left\{P^+(f_{(N)}^+\overline{{u^+_m}^{(N)}})+
 P^-(f_{(N)}^+\overline{{u^+_m}^{(N)}})-c_0(f_{(N)}^+\overline{{u^+_m}^{(N)}})\right\}_{N\in\mathbb{N}_1}\in
\mathcal{K}.
\end{equation}
Since $f_{(N)}^+(t)\rightrightarrows f^+(t)$ and
  $f^+(t)\not=0$ for a.a. $t\in\mathbb{T}$ (see (14)),
 it follows from (95) that $\{{u^+_m}^{(N)}\}_{N\in\mathbb{N}_1}$ contains an almost
 everywhere convergent subsequence.

Now we will show that the same is true for
$\{{u_i^{+(N)}}\}_{N\in\mathbb{N}_1}$,  $1\leq i\leq m-1$. Since
$\{\zeta_i^{(N)}{u^+_m}^{(N)}\}_{N\in\mathbb{N}_1}\in \mathcal{K}$
(see (19) and Statement 1) and hence
$\{P^-(\zeta_i^{(N)}{u^+_m}^{(N)})\}_{N\in\mathbb{N}_1}\in
\mathcal{K}$, it follows from (91) that
\begin{equation}
\left\{P^-(f_{(N)}^+\overline{{u_i^{+(N)}}})\right\}_{N\in\mathbb{N}_1}\in
\mathcal{K} \end{equation}
 as well. The compactness of the operator (90) and (19)
imply that
\begin{equation}
\left\{P^+\big(f_{(N)}^+\overline{{u_i^{+(N)}}}\big)\right\}_{N\in\mathbb{N}_1}=
\left\{P^+\big((f_{(N)}^+-f^+)\overline{{u_i^{+(N)}}}\big)+
P^+\big(f^+\overline{{u_i^{+(N)}}}\big)\right\}_{N\in\mathbb{N}_1}\in
\mathcal{K}
\end{equation}
 The relations (96) and (97) imply that
$\{f_{(N)}^+\overline{{u_i^{+(N)}}}\}_{N\in\mathbb{N}_1}\in
\mathcal{K}$ and, consequently, an almost everywhere convergent
subsequence can be extracted from
$\{\overline{{u_i^{+(N)}}}\}_{N\in\mathbb{N}_1}$.

The proof of the property $(ii)$ is completed and thus (26) holds.

\smallskip

The proof of the remaining conditions in the part $\mathbf{(b)}$
 continues as follows. Since the inverse of a matrix
function $F$ of  the form (13) is
\begin{equation}
F^{-1}=\begin{pmatrix}1&0&\cdots&0&0\\
          0&1&\cdots&0&0\\
           0&0&\cdots&0&0\\
           \cdot&\cdot&\cdots&\cdot&\cdot\\
           0&0&\cdots&1&0\\
           -\frac{\zeta_{1}}{f^+}&-\frac{\zeta_{2}}{f^+}&\cdots&-\frac{\zeta_{m-1}}{f^+}&\frac1{f^+}
           \end{pmatrix},
\end{equation}
(19) implies that
\begin{equation}
\big(F^{(N)}\big)^{-1}(t)\rightrightarrows F^{-1}(t).
\end{equation}
Hence
$$
U_{F^{(N)}}(t)=\big(F^{(N)}\big)^{-1}(t)\cdot F^{(N)}(t)\cdot
U_{F^{(N)}}(t)\rightrightarrows F^{-1}(t)F_c^+(t)
$$
(see (99) and (26)) and if we  denote $U_F(t):=F^{-1}(t)F_c^+(t)$,
then (27) holds and the equation in (17) follows  directly. Since
each matrix function in (22) has the structure (15), (23), the
limiting matrix function $U_F(t)\in \mathcal{S}
\mathcal{U}_m(\mathbb{T})$ has the structure (15),
 (16). The uniqueness
of $U_F(t)$ follows from the uniqueness of the canonical spectral
factor and the equation in (17) since $F(t)$ is invertible.

\medskip

{\bf B. Displacement Structure. } In this section we prove that
the matrix $\Delta$ defined by (47) has a {\em displacement
structure of rank $m$ with respect to $Z$}, i.e. (see [14; App.
F.1])
\begin{equation}
R_Z\Delta:=\Delta-Z\Delta Z^*
\end{equation}
has rank $m$, where $Z$ is the upper triangular $(N+1)\times(N+1)$
matrix with ones on the first up-diagonal and zeros elsewhere
(i.e. a Jordan block with eigenvalue $0$). There are several forms
of displacement structure and we have selected a suitable one.

Obviously, $I_{N+1}$ has the displacement structure of rank 1,
namely,
\begin{equation}
R_ZI_{N+1}=I_{N+1}-ZI_{N+1}Z^*={\mathcal E}{\mathcal E}^*,
\end{equation}
where ${\mathcal E}=(0,0,\ldots,0,1)^T \in \mathbb{C}^{N+1}$. We
will show that for each Toeplitz-like matrix
\begin{equation}
\Theta=\begin{pmatrix}\eta_{0}&\eta_{1}&\eta_{2}&\cdots&\eta_{n-1}&\eta_{n}\\
        \eta_{1}&\eta_{2}&\eta_{3}&\cdots&\eta_{n}&0\\
        \eta_{2}&\eta_{3}&\eta_{4}&\cdots&0&0\\
        \cdot&\cdot&\cdot&\cdots&\cdot&\cdot\\
        \eta_{n}&0&0&\cdots&0&0\end{pmatrix},
\end{equation}
the matrix $\Theta\Theta^*$ has the displacement structure of rank
1, namely
\begin{equation} R_Z(\Theta\Theta^*)=\Theta\Theta^*-Z\Theta\Theta^*Z^*=\Lambda\Lambda^*,
\end{equation}
where $\Lambda=(\eta_0,\eta_1,\ldots,\eta_n)^T$. Indeed, it
follows from the definitions of matrices $Z$, $\mathcal{E}$, and
$\Lambda$ and from the structure of $\Theta$ that
\begin{gather}
Z^*\mathcal{E}=\mathbf{0}\;\text{ and }\;\mathcal{E}^TZ=\mathbf{0}^T,\\
Z\Theta=\Theta Z^*\;\text{ and }\;Z\Theta^*=\Theta^* Z^*,
\end{gather}
and
\begin{equation}
\Theta-Z\Theta Z=\Lambda\mathbf{1}^T\;\text{ and }\;
\Theta^*-Z^*\Theta^* Z^*=\mathbf{1}\Lambda^*,
\end{equation}
where $\mathbf{0}$ and  $\mathbf{1}$ are defined by (40). Since
$\mathbf{1}^T\mathbf{1}=1$, it follows from (106) that
$$
(\Theta-Z\Theta Z)(\Theta^*-Z^*\Theta^* Z^*)=\Lambda\Lambda^*.
$$
Hence, taking into account (105),
\begin{gather*}
\Lambda\Lambda^*=\Theta\Theta^*-\Theta Z^*\Theta^*Z^*-Z\Theta
Z\Theta^*+Z\Theta
ZZ^*\Theta^*Z^*=\\
\Theta\Theta^*-Z\Theta\Theta^*Z^*-Z\Theta \Theta^*Z^*+Z\Theta
ZZ^*\Theta^*Z^*
\end{gather*}
and (103) holds since (see (101), (105), and (104))
\begin{gather*}
-Z\Theta \Theta^*Z^*+Z\Theta
ZZ^*\Theta^*Z^*=Z\Theta(ZZ^*-I_{N+1})\Theta^*Z^*\\
=-Z\Theta{\mathcal E}{\mathcal E}^T\Theta^*Z^*=-\Theta
Z^*{\mathcal E}{\mathcal
E}^TZ\Theta^*=-\Theta\,\mathbf{0}\,\mathbf{0}^T\Theta^*=0.
\end{gather*}

Every matrix $\Theta_i$, $i=1,2,\ldots,m-1$, defined by (45) has
the structure (102) by virtue of (46). Thus we can write  (103)
for each $i$,
\begin{equation}
R_Z(\Theta_i\Theta_i^*)=
\Theta_i\Theta_i^*-Z\Theta_i\Theta^*_iZ^*=\Lambda_i\Lambda^*_i\,,\;\;\;i=1,2,\ldots,m-1.
\end{equation}
Since $R_Z$ defined by (100) is linear,
$R_Z(\Delta_1+\Delta_2)=R_Z\Delta_1+R_Z\Delta_2$, it follows from
(47), (107), and (101) that
\begin{gather*}
R_Z\Delta=R_Z\left(\sum_{i=1}^{m-1}\Theta_i\Theta_i^*+I_{N+1}
\right)=\sum_{i=1}^{m-1}R_Z(\Theta_i\Theta_i^*)+R_ZI_{N+1}=
\sum_{i=1}^{m-1}\Lambda_i\Lambda_i^*+\mathcal{E}\mathcal{E}^*=AA^*,
\end{gather*}
where
$A=\left[\Lambda_1,\Lambda_2,\ldots,\Lambda_{m-1},\mathcal{E}\right]$
is the $(N+1)\times m$ matrix (of rank at most $m$) with columns
$\Lambda_1,\Lambda_2,\ldots,\Lambda_{m-1}$ and $\mathcal{E}$.

 \vskip+2cm

 A. Razmadze Mathematical Institute

Georgian National Academy of Sciences

1, M. Aleksidze Str.

Tbilisi 0193

Georgia

E-mail: lephremi@umd.edu

\end{document}